\documentclass[reqno,12pt,letterpaper]{amsart}
\usepackage{amsmath,amssymb,amsthm,graphicx,mathrsfs,url}
\usepackage[usenames,dvipsnames]{color}
\usepackage[colorlinks=true,linkcolor=Red,citecolor=Green]{hyperref}

\def\?[#1]{\textbf{[#1]}\marginpar{\Large{\textbf{??}}}}
\setlength{\textheight}{8in} \setlength{\oddsidemargin}{0.0in}
\setlength{\evensidemargin}{0.0in} \setlength{\textwidth}{6.4in}
\setlength{\topmargin}{0.18in} \setlength{\headheight}{0.18in}
\setlength{\marginparwidth}{1.0in}
\setlength{\abovedisplayskip}{0.2in}
\setlength{\belowdisplayskip}{0.2in}
\setlength{\parskip}{0.05in}

\renewcommand{\Re}{\mathop{\rm Re}\nolimits}
\renewcommand{\Im}{\mathop{\rm Im}\nolimits}
\DeclareMathOperator{\Op}{Op}

\DeclareMathOperator{\supp}{supp}

\DeclareMathOperator{\Vol}{Vol}

\DeclareMathOperator{\dist}{dist}

\DeclareMathOperator{\WF}{WF}
\DeclareMathOperator{\Av}{Av}

\newtheorem{prop}{Proposition}
\newtheorem{thm}[prop]{Theorem}

\newtheorem{lem}[prop]{Lemma}

\newtheorem{rem}[prop]{Remark}

\numberwithin{equation}{section}
\numberwithin{prop}{section}

\begin{document}
\title{Semiclassical Cauchy Estimates and applications}
\author{Long Jin}
\email{jinlong@math.berkeley.edu}
\address{Department of Mathematics, Evans Hall, University of California,
Berkeley, CA 94720, USA}
\date{}
\maketitle

\section{Introduction}
In this note, we study the solutions to semiclassical Schr\"{o}dinger
equations on a real analytic manifold $ M $ of dimension $ n > 1$:
\begin{equation}\label{eq:sch}
(-h^2\Delta_g+V(x)-E(h))u(h)=0,
\end{equation}
where $V$ and $ g $ are  real-analytic function and metric on $ M $,
respectively and  $E(h)\to E_0$ as $h\to0$. We also consider more
general differential operators and,  when  $ M = {\mathbb R}^n  $,
analytic pseudodifferential operators satisfying suitable ellipticity condition.

The analyticity of $ V $ and of the metric $ g $ imply that solutions
are real analytic \cite[Theorem 8.6.1, 9.5.1(for hyperfunctions)]{H1}
and in particular Cauchy estimates hold:
\[  \sup_{ K} |\partial^\alpha u ( x ) | \leq C_h^{|\alpha| } | \alpha
|^{|\alpha | } , \ \ \forall \, K \Subset M  , \]
for some constant $C_h$ depending on $h$.
The semiclassical Cauchy estimate provides the following improvement:
\begin{equation}
\label{eq:semC}   \sup_{K} |\partial^\alpha u ( x ) | \leq h^{-\frac{n-1}{2}} C^{|\alpha|} ( h^{-1} + | \alpha |)^{|\alpha | } , \ \ \forall \, K \Subset M, \end{equation}
where $ C $ depends only on $ g$, $ V $ and $ K$.

The proof of \eqref{eq:semC} uses the FBI transform approach to analytic semiclassical theory developed by Sj\"ostrand \cite{Sj} and Martinez \cite{M}. It is presented in Section \ref{sec:sc} that near every point the solution $u$ can be analytic continued to a holomorphic function in a uniform complex neighborhood. Moreover, the analytic continuation will grow at most exponentially in $h^{-1}$, which as we will see, is equivalent to the semiclassical version of the Cauchy estimates on the derivatives of $u$.

We should remark that for differential operators one can obtain
estimates equivalent to \eqref{eq:semC} (see Proposition \ref{pr:equ}) by using H\"{o}rmander's approach to analytic hypoellipticity and rescaling -- see \cite[Lemma 7.1]{D-F}.  In fact, we learned about this after proving \eqref{eq:semC} directly using the FBI transform and the study of the Donnelly-Fefferman paper \cite{D-F} led to applications to the volume of nodal sets (zero set of $u(h)$) in the semiclassical setting.  An illustration of the level sets of eigenfunctions is shown in Fig.~\ref{f:1}: the zero sets occur in the regions where the eigenfunction is ``small'' and, in particular, are indistinguishable from the classically forbidden regions.

\begin{figure}[ht]
\includegraphics[width=3.15in]{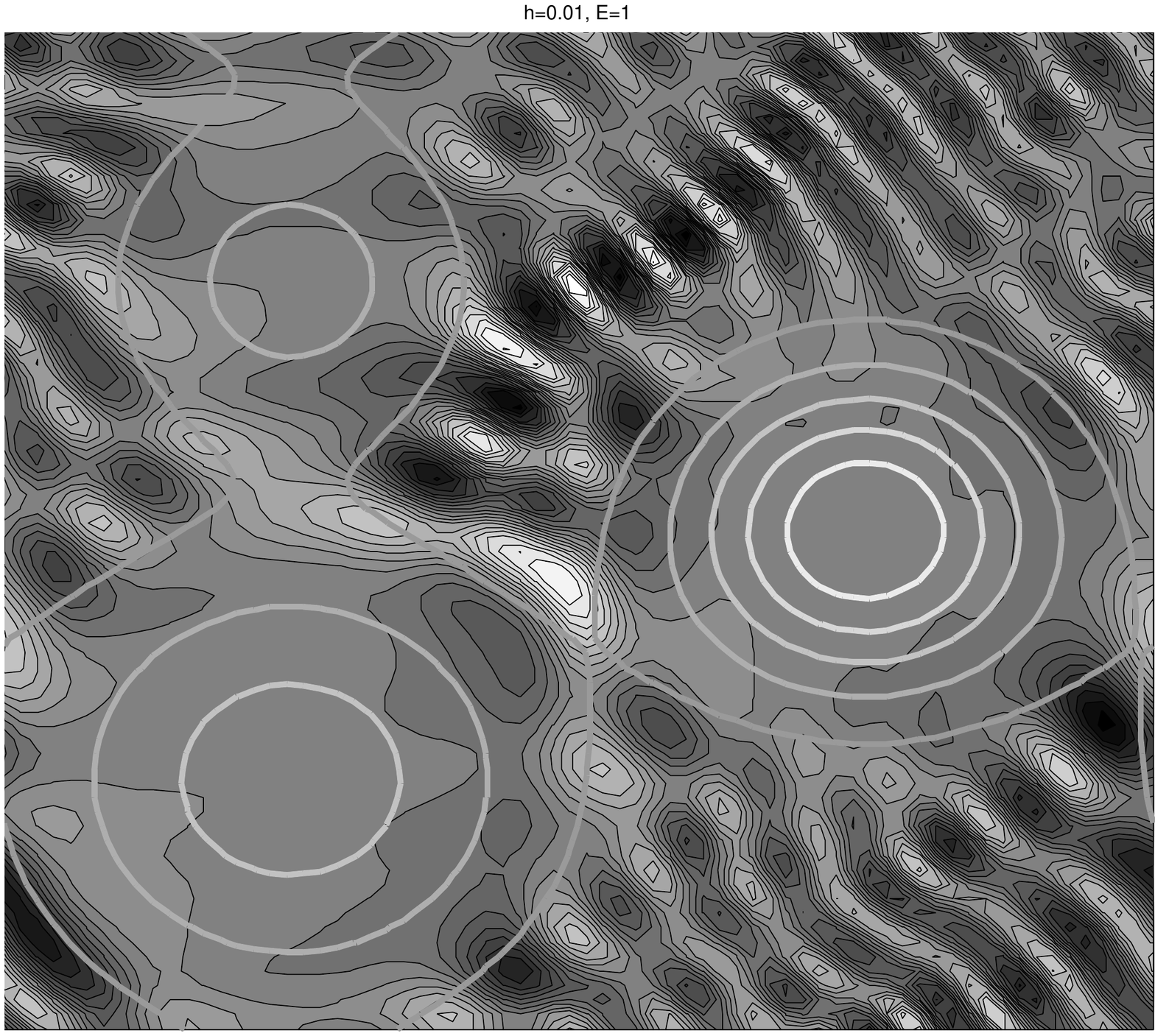} \includegraphics[width=3.15in]{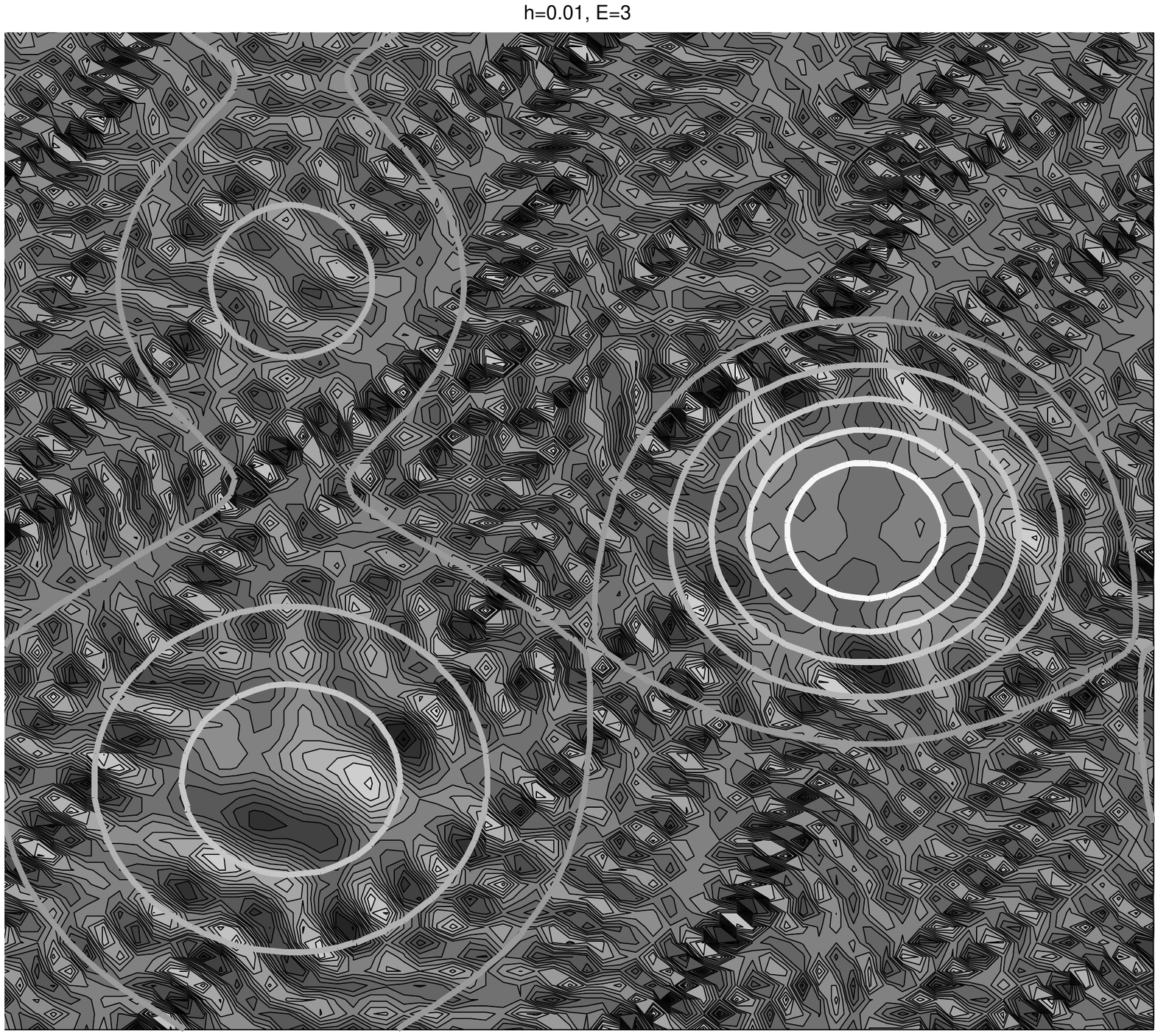}
\caption{Level sets of eigenfunctions of $ - h^2 \Delta + V $ on a
torus, $ [ 0 ,1 ] \times [ 0 , 1] $ where $ h = 0.01$ and $ V $ is a
 periodized sum of three bumps:  $   5e^{-10((x-0.75)^2 + (y-0.5)^2))}  +
 2e^{-10((x+0.25)^2 + (y-0.75)^2)} +
3e^{-5((x+0.25)^2 + (y+0.25)^2)} $ (with level sets shown). In the first picture the energy
 level is close to $ 1 $ and in the second, to $ 3 $ so that the
difference in classically forbidden regions is clearly visible.}
\label{f:1}
\end{figure}

Section \ref{sec:dp} contains a proof of the doubling property of solutions to \eqref{eq:sch} where we allow the manifold and the potential to be merely smooth. This type of results have been proved in more general setting, e.g. \cite{B-C} for $C^1$-potentials and are closely related to the unique continuation problems. There are two different ways to achieve such kind of results: the usual approach is through the Carleman-type estimates which establish a priori estimates with a weight; another approach was developed by Garofalo and Lin \cite{G-L} based on a combination of geometric and variational ideas. We shall follow the usual approach.

Finally in Section \ref{sec:ap}, we study the vanishing property of solutions to \eqref{eq:sch}. We shall show that the vanishing order of $u$ at a point is at most $Ch^{-1}$ and the nodal set of $u$, i.e. the set where $u$ vanishes, has $(n-1)$-dimensional Hausdorff measure $\sim h^{-1}$. When $V\equiv0$, i.e. $u$ is eigenfunctions of the Laplacian operator on $M$ with eigenvalues $Eh^{-2}$, this is the analytic case of Yau's conjecture \cite{Y} and is proved by Donnelly-Fefferman \cite{D-F}. We shall follow their argument closely. In the smooth setting, this is still an open problem, exponential types of upper and lower bounds were first established by Hardt and Simon \cite{Ha-Si}, see the notes \cite{H-L} for a detailed study on nodal sets and \cite{C-M}, \cite{S-Z}, \cite{He-So}, \cite{S-Z2} for recent progress on Yau's conjecture. Also see \cite{Z-Z} for nodal sets of semiclassical Schr\"{o}dinger operators in the smooth setting and \cite{B-H} for the physics perspective.

\begin{figure}[ht]
\includegraphics[width=3.15in]{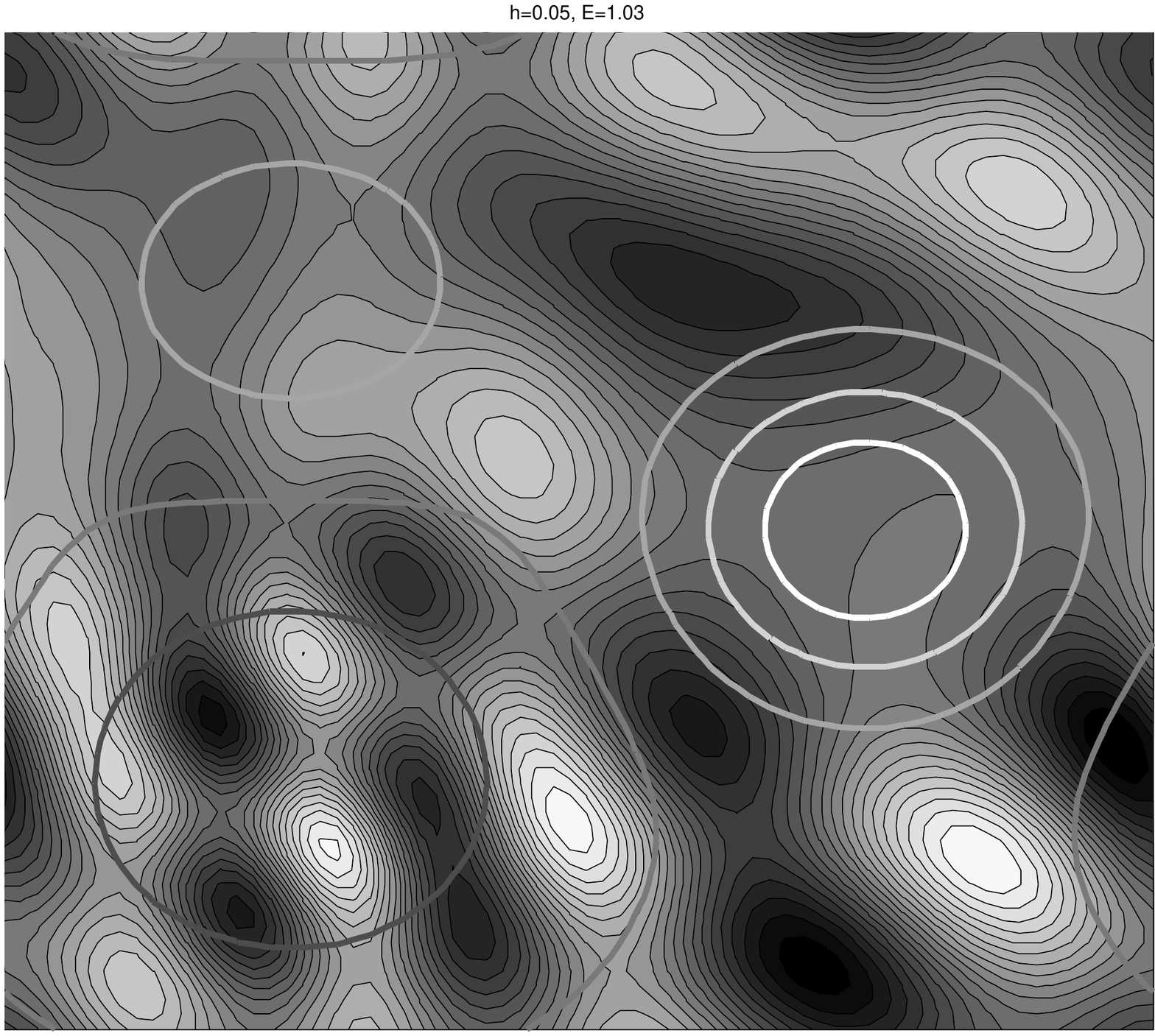} \includegraphics[width=3.15in]{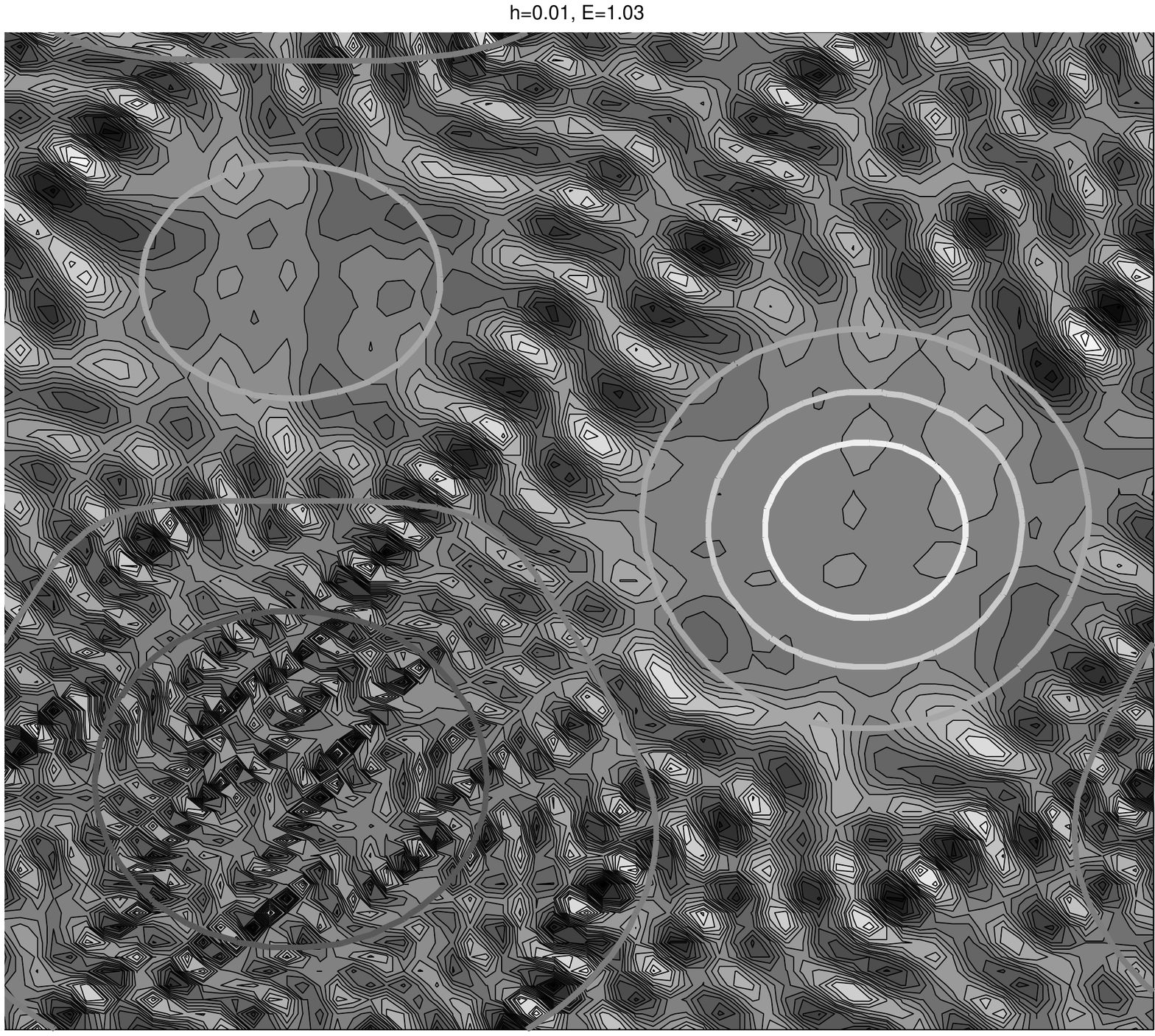}
\caption{Level sets of eigenfunctions of $ - h^2 \Delta + V $ on a
  torus, $ [0 ,1 ] \times [ 0 , 1] $ where $ V $  is a
 periodized sum of two bumps and one well :  $   5e^{-10((x-0.75)^2 + (y-0.5)^2))}  +
 2e^{-10((x+0.25)^2 + (y-0.75)^2)} -
3e^{-5((x+0.25)^2 + (y+0.25)^2)} $ (with level sets shown). The energy level is
now fixed at $ E = 1$ but $ h = 0.05 $ and $ h = 0.01$.}
\label{f:2}
\end{figure}

\subsection*{\normalsize Acknowledgement}
The author would like to thank Maciej Zworski for the encouragement
and advice during the preparation of this paper.  Thanks go also to
Chris Wong for providing a MATLAB code for calculating eigenfunctions for Schr\"odinger operators on tori, and to the National Science
Foundation for support in the Summer of 2012 under the grant
DMS-1201417.

\section{Semiclassical Cauchy Estimates and Analytic Continuation}
\label{sec:sc}

\subsection{Fourier-Bros-Iagolnitzer Transform}
\label{sec:fbi}

In this section we review some basic facts of Fourier-Bros-Iagolnitzer transform. For $h>0$, we define $T_h:\mathscr{S}'(\mathbb{R}^n)\to\mathscr{S}'(\mathbb{R}^{2n})$ as
\begin{equation}\label{eq:fbi}
T_hu(x,\xi)=2^{-\frac{n}{2}}(\pi h)^{-\frac{3n}{4}}\int e^{\frac{i}{h}(x-y)\xi-\frac{1}{2h}(x-y)^2}u(y)dy.
\end{equation}
In other words, $T_hu(x,\xi)=\langle\phi_{x,\xi},u\rangle_{\mathscr{S},\mathscr{S}'}$ where $\phi_{x,\xi}$ is the so-called coherent state centered at $(x,\xi)$, so $T_hu(x,\xi)$ captures the microlocal property of $u$ at $(x,\xi)\in\mathbb{R}^{2n}$. We state some basic properties of the FBI transform:

(1)  If $u\in\mathscr{S}'(\mathbb{R}^n)$, then $e^{\frac{\xi^2}{2h}}T_hu(x,\xi)$ is a holomorphic function of $z=x-i\xi\in\mathbb{C}^n$. In fact, $T_h(\mathscr{S}'(\mathbb{R}^n))=\mathscr{S}'(\mathbb{R}^{2n})\cap e^{-\frac{\xi^2}{2h}}\mathcal{H}(\mathbb{C}^n_{x-i\xi})$ where $\mathcal{H}(\mathbb{C}^n_{x-i\xi})$ is the space of entire functions on $\mathbb{C}^n$. This also shows $hD_xT_hu=(\xi+ihD_\xi)T_hu$.

(2) For every $u\in\mathscr{S}'(\mathbb{R}^n)$, $u=T_h^\ast T_hu$ where $T_h^\ast$ is defined as
\begin{equation*}
T_h^\ast v(y)=2^{-\frac{n}{2}}(\pi h)^{-\frac{3n}{4}}\int e^{-\frac{i}{h}(x-y)\xi-\frac{1}{2h}(x-y)^2}v(x,\xi)dxd\xi
\end{equation*}
(interpreted as an oscillatory integral with respect to $\xi$.)

(3) If $u\in L^2(\mathbb{R}^n)$, then $T_hu\in L^2(\mathbb{R}^{2n})$ and $\|T_hu\|_{L^2(\mathbb{R}^{2n})}=\|u\|_{L^2(\mathbb{R}^n)}$. Moreover, $T_hT_h^\ast$ is the orthogonal projection from $L^2(\mathbb{R}^{2n})$ onto $T_h(L^2(\mathbb{R}^n))=L^2(\mathbb{R}^{2n})\cap e^{-\frac{\xi^2}{2h}}\mathcal{H}(\mathbb{C}^n_{x-i\xi})$.

(4) Let $p\in S_{2n}(1)$, then $\tilde{p}(x,\xi,x^\ast,\xi^\ast)=p(x-\xi^\ast,x^\ast)$ belongs to $S_{4n}(1)$ and we have
\begin{equation*}
T_h\circ p(x,hD_x)=\tilde{p}(x,\xi,hD_x,hD_\xi)\circ T_h.
\end{equation*}
where $x^\ast$ and $\xi^\ast$ are the dual variables of $x$ and $\xi$ respectively. This formula is exact and $\tilde{p}$ does not depend on $\mu$ or which quantization we are using.

Bros and Iagolnitzer first use this type of transform to characterize analytic wavefront set: $(x_0,\xi_0)\in\WF_a(u)$ if and only if $T_hu(x,\xi)=O(e^{-\frac{c}{h}})$ uniformly in a neighborhood of $(x_0,\xi_0)$ for some $c>0$. In general, we can define FBI transform with a phase which ``looks like'' the standard phase above and an elliptic analytic symbol. All such FBI transform can be used to characterize analytic wavefront set, see \cite{D}, \cite{Sj} and \cite{Z} for this general approach. For convenience, we shall only consider the standard FBI transform and the following modification.

\begin{lem}[Change of FBI by an analytic symbol]\label{pr:chfbi}
Suppose $a=a(x,y,\xi)$ is an analytic symbol defined for $x,y\in\mathbb{C}^n,\xi\in\mathbb{R}^n$ and is of tempered growth in $x,y,\xi$,
$\mu>1$ fixed, then we define $T'_h$ as
\begin{equation}
T'_hu(x,\xi)=2^{-\frac{n}{2}}(\pi h)^{-\frac{3n}{4}}\int e^{\frac{i}{h}(x-y)\xi-\frac{\mu}{2h}(x-y)^2}a(x,y,\xi)u(y)dy
\end{equation}
We have if $|T_hu(x,\xi)|=O(e^{-c/h})$ in a real neighborhood $U$ of $(x_0,\xi_0)$, then $|T_h'u(x,\xi)|=O(e^{-c'/h})$ in a neighborhood $V$ of $(x_0,\xi_0)$, where $c'$ and $V$ only depends on $c$, the growth of $a$ and the size of $U$.
\end{lem}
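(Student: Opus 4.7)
The plan is to reduce the claim to a pointwise estimate on an integral kernel by exploiting the reproducing formula $u=T_h^\ast T_hu$ from property (2), and then to analyze the kernel via an explicit Gaussian (steepest descent) integration in $y$.

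Substituting $u=T_h^\ast T_hu$ into the definition of $T'_h$ and interchanging the order of integration (justified, as always, by treating the $\xi'$-integral as an oscillatory integral) yields
\begin{equation*}
T'_hu(x,\xi)=\int K_h(x,\xi;x',\xi')\,T_hu(x',\xi')\,dx'\,d\xi',
\end{equation*}
with
\begin{equation*}
K_h(x,\xi;x',\xi')=c_n\int e^{\frac{1}{h}\Phi(y;x,\xi,x',\xi')}\,a(x,y,\xi)\,dy,
\end{equation*}
where $\Phi=i(x-y)\xi-\tfrac{\mu}{2}(x-y)^2-i(x'-y)\xi'-\tfrac{1}{2}(x'-y)^2$ is quadratic in $y$ with nondegenerate negative-definite real Hessian $-(\mu+1)I$. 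The next step is to carry out the $y$-integration by shifting the contour through the critical point $y_0=[(\mu x+x')+i(\xi'-\xi)]/(\mu+1)$, which lies in the domain of holomorphy of $a$ when $(x,\xi,x',\xi')$ is restricted to a bounded set; the tempered growth of $a$ controls the behavior in the complementary directions. A direct computation of $\Phi(y_0)$ gives
\begin{equation*}
\Re \Phi(y_0)=-\frac{\mu(x-x')^2+(\xi-\xi')^2}{2(\mu+1)},
\end{equation*}
so that
\begin{equation*}
|K_h(x,\xi;x',\xi')|\leq C h^{-n/2}\langle x,x',\xi,\xi'\rangle^{N}\exp\!\left(-\frac{\delta\bigl(|x-x'|^2+|\xi-\xi'|^2\bigr)}{h}\right)
\end{equation*}
for $\delta=\min(\mu,1)/(2(\mu+1))>0$, with $N$ determined by the tempered growth of $a$.

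With this kernel estimate the lemma follows by splitting the $(x',\xi')$ integration. Choose $V\Subset U$ with $\mathrm{dist}(V,\partial U)\geq 2\delta_0>0$ and let $(x,\xi)\in V$. On the region $(x',\xi')\in U$ one bounds the kernel trivially and uses the hypothesis $|T_hu(x',\xi')|\leq Ce^{-c/h}$, yielding a contribution $O(h^{-M}e^{-c/h})=O(e^{-c'/h})$. On $(x',\xi')\notin U$ the Gaussian factor supplies $\exp(-\delta\delta_0^2/h)$, while the remaining integral of $T_hu$ against polynomial weights converges because $T_hu$ is of tempered growth; indeed, from property (1) the function $e^{\xi^2/(2h)}T_hu$ is entire and $T_h$ maps $\mathscr{S}'$ into itself, so in particular $|T_hu(x',\xi')|$ grows at most polynomially in $(x',\xi')$ and decays like $e^{-\xi'^2/(2h)}$ at infinity. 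Adding the two contributions gives $|T'_hu(x,\xi)|\leq Ce^{-c'/h}$ on $V$, with $c'$ and $V$ depending only on $c$, $\mu$, the growth of $a$, and the size of $U$.

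The delicate step is the $y$-deformation in the presence of the analytic symbol $a$: one needs to know that $a(x,y,\xi)$ is holomorphic in a complex neighborhood of $y_0$ uniform in the parameters of interest, which is why the lemma is stated under the hypothesis that $a$ is an analytic symbol of tempered growth. Once this is in hand, the tempered polynomial factors coming from $a$ are absorbed into the $h^{-N}$ prefactor, harmless against the genuine exponential gain in $h^{-1}$ coming from both the Gaussian kernel (away from the diagonal $(x,\xi)=(x',\xi')$) and from the hypothesis on $T_hu$ (near the diagonal, inside $U$).
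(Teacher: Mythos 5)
Your proposal is correct and follows essentially the same route as the paper: insert $u=T_h^\ast T_hu$, evaluate the resulting Gaussian $y$-integral by shifting the contour through the critical point $\frac{\mu x+\tilde{x}}{\mu+1}-i\frac{\xi-\tilde{\xi}}{\mu+1}$ (using holomorphy and tempered growth of $a$), obtain the off-diagonal bound $|K_h|\leqslant Ch^{-N}e^{-\frac{\delta}{h}\left(|x-\tilde{x}|^2+|\xi-\tilde{\xi}|^2\right)}$, and split the $(\tilde{x},\tilde{\xi})$-integration into the region near $(x,\xi)$ (where the hypothesis on $T_hu$ applies) and the far region (killed by the Gaussian). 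One minor slip, which does not affect the argument: property (1) does not give pointwise decay of $T_hu$ like $e^{-|\xi'|^2/(2h)}$ (it only says $e^{\xi^2/(2h)}T_hu$ is entire and tempered), but the polynomial bound on $T_hu$ together with the kernel's Gaussian decay is all you actually use.
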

\begin{proof}
We shall write $T_h'u=T_h'(T_h^\ast T_hu)=(T_h'T_h^\ast)T_hu$.
\begin{equation*}
T_h'u(x,\xi)=\iiint e^{\frac{i}{h}(x-y)\xi-\frac{\mu}{2h}(x-y)^2-\frac{i}{h}(\tilde{x}-y)\tilde{\xi}-\frac{1}{2h}(\tilde{x}-y)^2}
a(x,y,\xi)T_hu(\tilde{x},\tilde{\xi})d\tilde{x}d\tilde{\xi}dy.
\end{equation*}
Therefore
\begin{equation}\label{eq:chfbi}
T_h'u(x,\xi)=\iint b(x,\xi,\tilde{x},\tilde{\xi})T_hu(\tilde{x},\tilde{\xi})d\tilde{x}d\tilde{\xi}
\end{equation}
where
\begin{equation*}
b(x,\xi,\tilde{x},\tilde{\xi})=e^{\frac{1}{\mu+1}\frac{i}{h}(x-\tilde{x})(\xi+\mu\tilde{\xi})
-\frac{\mu}{\mu+1}\frac{1}{2h}(x-\tilde{x})^2-\frac{1}{\mu+1}\frac{1}{2h}(\xi-\tilde{\xi})^2}
\int e^{-\frac{\mu+1}{2h}(y-\frac{\mu x+\tilde{x}}{\mu+1}+i\frac{\xi-\tilde{\xi}}{\mu+1})^2}a(x,y,\xi)dy
\end{equation*}
Now we change the contour to $y\mapsto y+\frac{\mu x+\tilde{x}}{\mu+1}-i\frac{\xi-\tilde{\xi}}{\mu+1}$,
\begin{equation*}
b(x,\xi,\tilde{x},\tilde{\xi})=e^{\frac{1}{\mu+1}\frac{i}{h}(x-\tilde{x})(\xi+\mu\tilde{\xi})
-\frac{\mu}{\mu+1}\frac{1}{2h}(x-\tilde{x})^2-\frac{1}{\mu+1}\frac{1}{2h}(\xi-\tilde{\xi})^2}
\int e^{-\frac{\mu+1}{2h}y^2}a(x,y+\frac{\mu x+\tilde{x}}{\mu+1}-i\frac{\xi-\tilde{\xi}}{\mu+1},\xi)dy
\end{equation*}
and use the assumption that $a$ is of tempered growth in $x,y,\xi$, we have
\begin{equation*}
|b(x,\xi,\tilde{x},\tilde{\xi})|\leqslant Ce^{-\frac{\delta}{h}(x-\tilde{x})^2-\frac{\delta}{h}(\xi-\tilde{\xi})^2}.
\end{equation*}
where $\delta,C$ depends only on the growth of $a$. Now the theorem follows easily from $\eqref{eq:chfbi}$ by separating the integral to two parts: $(\tilde{x},\tilde{\xi})$ close to $(x,\xi)$ and $(\tilde{x},\tilde{\xi})$ far away from $(x,\xi)$.
\end{proof}

\subsection{Equivalence between Cauchy estimates and decay of the FBI transform}
\label{sec:eqi}
In this section, we prove the equivalence between the semiclassical Cauchy estimate and the uniform exponential decay for the FBI transform when $|\xi|$ is large. Comparing to \cite{M}, we use different parameters for the FBI transform and the function $u$ itself, so we can capture both the microlocal and semiclassical properties of $u$. The idea of the proof is similar to the proof of the fact the projection of analytic wavefront set is the analytic singular support, see \cite{Sj}.

\begin{prop}\label{pr:equ}
Let $u=u(h),0<h\leqslant h_0$ be a family of function on a neighborhood $X$ of $x_0\in\mathbb{R}^n$ such that
$\|u\|_{L^\infty(X)}=O(h^{-N})$. Then the following are equivalent:\\
(i) There exists an open neighborhood $V\subset\subset X$ of $x_0$ and constants $C_0,C_1,\delta>0$ such that for every
$0<\tilde{h}\leqslant h\leqslant h_0$, $x\in V$ and $|\xi|\geqslant C_0$,
\begin{equation}\label{eq:fbies}
|T_{\tilde{h}}u(x,\xi,h)|\leqslant C_1e^{-\frac{\delta}{\tilde{h}}}\|u\|_{L^\infty}.
\end{equation}
(ii) There exists a complex neighborhood $W\subset\subset X+i\mathbb{R}^n$ of $x_0$ and a constant $C,C_2>0$ such that $u(h)$ can be extended holomorphic to $W$ and
\begin{equation}\label{eq:comes}
\sup_W|u(h)|\leqslant C_2e^{\frac{C}{h}}\|u\|_{L^\infty}.
\end{equation}
(iii) There exists an open neighborhood $U\subset\subset X$ of $x_0$, there exists $C_3>0$ such that for all $x\in U$,
\begin{equation}\label{eq:caues}
|(hD)^\alpha u(x)|\leqslant C_3^{|\alpha|}(1+h|\alpha|)^{|\alpha|}\|u\|_{L^\infty}.
\end{equation}
\end{prop}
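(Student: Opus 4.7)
The plan is to close the cycle $\mathrm{(iii)}\Rightarrow\mathrm{(ii)}\Rightarrow\mathrm{(i)}\Rightarrow\mathrm{(iii)}$; equivalently, one establishes $\mathrm{(ii)}\Leftrightarrow\mathrm{(iii)}$ by Taylor expansion together with Cauchy's integral using an $\alpha$-adapted radius, and $\mathrm{(i)}\Leftrightarrow\mathrm{(ii)}$ by the FBI inversion $u=T_h^\ast T_hu$ combined with a $y$-contour deformation, in the spirit of the standard argument that the analytic singular support is the projection of the analytic wavefront set.

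\textbf{$\mathrm{(iii)}\Rightarrow\mathrm{(ii)}$.} Taylor-expand $u$ at $x_0\in U$. The hypothesis gives $|\partial^\alpha u(x_0)|\leq C_3^{|\alpha|}(h^{-1}+|\alpha|)^{|\alpha|}\|u\|_\infty$, and Stirling's inequality for $\alpha!$ shows that $\sum_\alpha \partial^\alpha u(x_0)(z-x_0)^\alpha/\alpha!$ converges uniformly on a polydisc of radius $r>0$ independent of $h$. A term-by-term estimate, in which the dominant contribution comes from multi-indices with $|\alpha|\sim 1/h$, yields $\sup_W|u|\leq C_2 e^{C/h}\|u\|_\infty$ on a suitable complex neighborhood $W$, proving \eqref{eq:comes}.

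\textbf{$\mathrm{(ii)}\Rightarrow\mathrm{(iii)}$.} This is the delicate direction and the main obstacle of the proof. The naive Cauchy inequality on a polydisc of fixed radius $r_0$ only gives $|(hD)^\alpha u(x)|\leq(h|\alpha|/r_0)^{|\alpha|}C_2 e^{C/h}\|u\|_\infty$, and the $e^{C/h}$ cannot be absorbed into $C_3^{|\alpha|}(1+h|\alpha|)^{|\alpha|}$ when $h|\alpha|\ll 1$. The fix is to use Cauchy with a radius adapted to $\alpha$ after first sharpening the sup bound on small balls: since $|u|\leq\|u\|_\infty$ on $W\cap\mathbb R^n$ and $|u|\leq C_2 e^{C/h}\|u\|_\infty$ on $W$, applying the Hadamard three-circles / two-constants theorem to the one-dimensional slice $\zeta\mapsto u(x+\zeta v)$ produces the improvement $\sup_{|y-x|\leq r}|u(y)|\leq K\|u\|_\infty e^{cr/h}$ for $r\leq r_0/2$. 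Cauchy's formula with $r\sim h|\alpha|/c$ then yields $|(hD)^\alpha u(x)|\leq K(ce)^{|\alpha|}\|u\|_\infty$ in the regime $h|\alpha|\lesssim cr_0$; in the complementary regime $h|\alpha|\gtrsim cr_0$ the factor $(1+h|\alpha|)^{|\alpha|}$ already dominates $e^{C/h}$, and the fixed-radius Cauchy estimate itself gives \eqref{eq:caues}.

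\textbf{$\mathrm{(i)}\Leftrightarrow\mathrm{(ii)}$.} For $\mathrm{(ii)}\Rightarrow\mathrm{(i)}$, I substitute the holomorphic extension into the defining integral for $T_{\tilde h}u(x,\xi)$ and shift the $y$-contour by $-it\xi/|\xi|$ with $t>0$ small and fixed; the phase contributes the factor $e^{-t|\xi|/\tilde h}$, which for $|\xi|\geq C_0$ beats the bound $e^{C/\tilde h}\geq e^{C/h}$ inherited from (ii) (using $\tilde h\leq h$) and delivers \eqref{eq:fbies}. For $\mathrm{(i)}\Rightarrow\mathrm{(ii)}$, I use the inversion $u=T_h^\ast T_hu$, split the $\xi$-integral at $|\xi|=C_0$, and allow $y$ to be complex with $|\Im y|$ small: on the head $|\xi|\leq C_0$ we have a smooth oscillatory integral that extends holomorphically in $y$ with an $e^{C/h}$ bound coming from the Gaussian kernel together with $\|u\|_\infty=O(h^{-N})$, while on the tail $|\xi|\geq C_0$ the $e^{-\delta/h}$ decay of (i) absorbs the complex growth $e^{|\Im y||\xi|/h}$ once $|\Im y|$ is chosen small, and the Bargmann structure $T_hu=e^{-\xi^2/(2h)}F(x-i\xi)$ with $F$ entire ensures absolute convergence of the $\xi$-integration (cf.~\cite{Sj}).
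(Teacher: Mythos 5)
Your $\mathrm{(ii)}\Leftrightarrow\mathrm{(iii)}$ argument is essentially the paper's: the three-line/two-constants sharpening to $|u(z)|\leqslant Ch^{-N}e^{C|\Im z|/h}$ followed by Cauchy's inequality with the $\alpha$-adapted radius $r\sim h|\alpha|$ (fixed radius $r_0$ in the regime $h|\alpha|\gtrsim r_0$), and the Taylor-series extension using $(1+\tfrac{1}{h|\alpha|})^{|\alpha|}\leqslant e^{1/h}$ — just also record that the Taylor remainder tends to $0$ (same estimate), so the series genuinely extends $u$. Your $\mathrm{(ii)}\Rightarrow\mathrm{(i)}$ is also the paper's deformation, except that you shift the \emph{whole} $y$-contour while $u$ is holomorphic only near $x_0$; the shift must be implemented with a cutoff, $y\mapsto y-i\epsilon\chi(y)\xi/|\xi|$, the undeformed region $|y-x|\geqslant r/2$ being killed by the Gaussian $e^{-(x-y)^2/2\tilde h}$. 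That repair is routine.

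The genuine gap is in $\mathrm{(i)}\Rightarrow\mathrm{(ii)}$. Complexifying $y$ in $u=T_h^\ast T_hu$ produces a kernel of modulus $\sim e^{|\Im y||\xi|/h}$, growing exponentially in $|\xi|$, whereas hypothesis (i) used at the single scale $\tilde h=h$ only gives the $\xi$-\emph{independent} bound $e^{-\delta/h}$ on $V\times\{|\xi|\geqslant C_0\}$: it is overwhelmed as soon as $|\xi|>\delta/|\Im y|$, however small $|\Im y|\neq0$ is. The Bargmann factorization $T_hu=e^{-\xi^2/2h}F(x-i\xi)$ gives no help: for $u\in L^\infty$ with compact support $|T_hu|$ is merely bounded in $\xi$ (equivalently $F$ can grow like $e^{\xi^2/2h}$), and for $\tilde x\notin V$ there is no smallness at all, so your tail integral over $|\xi|\geqslant C_0$ is not even absolutely convergent after complexification. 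This is precisely why (i) is formulated for \emph{all} $0<\tilde h\leqslant h$. The paper's proof (following Sj\"ostrand/Lebeau) replaces the plain inversion by the contour-deformed delta function $\zeta=\xi+\tfrac i2|\xi|x$, yielding $u(x)=(2\pi h)^{-n}\iint e^{\frac ih(x-y)\xi-\frac{1}{2h}|\xi|(x-y)^2}a(x-y,\xi)u(y)\,dyd\xi$; for each $\xi$ the inner $y$-integral is an FBI transform at the frequency-dependent scale $\tilde h=\mu h/|\xi|$, with a modified Gaussian and symbol handled by Lemma \ref{pr:chfbi}, so (i) applied at scale $\tilde h$ gives $e^{-\delta'/\tilde h}=e^{-\delta'|\xi|/\mu h}$, i.e.\ decay proportional to $|\xi|/h$, which dominates the $e^{c|\Im z||\xi|/h}$ growth and makes the holomorphically extended $\xi$-integral converge with an $e^{C/h}$ bound, giving \eqref{eq:comes}. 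Without a frequency-dependent rescaling of this kind (variable Gaussian width, or decay in (i) proportional to $|\xi|$), the step you describe fails, and it is the heart of this implication.
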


\begin{proof}
First we notice that all of the statements are local, so we can extend $u$ to functions on $\mathbb{R}^n$, say by setting $u=0$ outside $X$, or better, to a family of functions in $C_0^\infty$ since each condition implies that $u$ is smooth (in fact, analytic) near $x_0$. Also if (ii) is true, then by Hadamard's three line theorem, there exists new constants $C,C_2>0$ such that
\begin{equation}\label{eq:comes2}
|u(z)|\leqslant C_2h^{-N}e^{\frac{C|\Im z|}{h}}.
\end{equation}

To prove that (ii) and (iii) are equivalent, we need the following elementary inequalities:
\begin{equation}\label{eq:e}
\forall t,s>0, (1+\frac{s}{t})^t\leqslant e^s\leqslant(1+\frac{s}{t})^{t+s}.
\end{equation}
\begin{equation}\label{eq:f}
\forall\alpha\in\mathbb{N}^n, (ne)^{-|\alpha|}|\alpha|^{|\alpha|}\leqslant\alpha!\leqslant|\alpha|^{|\alpha|}.
\end{equation}\\
Proof of (ii)$\Rightarrow$(iii): We can find a real neighborhood $U\subset\subset X$ of $x_0$ and a constant $r_0>0$ such that for all $x\in U$, the polydisc $D(x,r_0)\subset\subset W$, then by Cauchy's inequality (see \cite{H2} Theorem 2.2.7.) on $D(x,r)$ we have for a new constant $C>0$,
\begin{equation}\label{eq:cau}
|(hD)^\alpha u(x)|\leqslant Ch^{-N}e^{\frac{Cr}{h}}h^{|\alpha|}\alpha!r^{-|\alpha|}, 0<r\leqslant r_0, x\in U.
\end{equation}

Case 1: If $r_0\geqslant\frac{h|\alpha|}{C}$, then we take $r=\frac{h|\alpha|}{C}$ in \eqref{eq:cau} and get
\begin{equation*}
|(hD)^\alpha u(x)|\leqslant Ch^{-N}e^{|\alpha|}\alpha!C^{|\alpha|}|\alpha|^{-|\alpha|}
\end{equation*}
Now by \eqref{eq:f}, we have for a new constant $C>0$,
\begin{equation*}
|(hD)^\alpha u(x)|\leqslant C^{|\alpha|}h^{-N}.
\end{equation*}
This implies \eqref{eq:caues}.\\

Case 2: If $r_0<\frac{h|\alpha|}{C}$, then we take $r=r_0$ in \eqref{eq:cau} and get
\begin{equation*}
|(hD)^\alpha u(x)|\leqslant Ch^{-N}e^{\frac{Cr_0}{h}}h^{|\alpha|}\alpha!r_0^{-|\alpha|}
\end{equation*}
We use \eqref{eq:e} for $s=Cr_0,t=h|\alpha|$ and \eqref{eq:f}. Then
\begin{equation*}
\begin{split}
|(hD)^\alpha u(x)|&\leqslant Ch^{-N}(1+\frac{Cr_0}{h|\alpha|})^{\frac{Cr_0}{h}+|\alpha|}h^{|\alpha|}|\alpha|^{|\alpha|}r_0^{-|\alpha|}\\
&=Ch^{-N}(1+\frac{Cr_0}{h|\alpha|})^{\frac{Cr_0}{h}}(Cr_0+h|\alpha|)^{|\alpha|}r_0^{-|\alpha|}
\end{split}
\end{equation*}
which also implies \eqref{eq:caues} by our assumption $\frac{Cr_0}{h}<|\alpha|$.\\\\

Proof of (iii)$\Rightarrow$(ii): For $\delta>0$ small enough, $B(x_0,\delta)=\{|x-x_0|<\delta\}\subset U$, then for $x\in B(x_0,\delta)$, by Taylor's theorem,
\begin{equation*}
u(x)=\sum_{0\leqslant|\alpha|\leqslant k-1}\frac{\partial^\alpha u(x_0)}{\alpha!}(x-x_0)^\alpha+R_k
\end{equation*}
where
\begin{equation*}
R_k=\sum_{|\alpha|=k}\frac{1}{\alpha!}(x-x_0)^{\alpha}\int_0^1k(1-t)^{k-1}\partial^\alpha u(x_0+t(x-x_0))dt
\end{equation*}
Therefore by \eqref{eq:caues},
\begin{equation*}
|R_k|\leqslant\sum_{|\alpha|=k}\frac{1}{\alpha!}\delta^kC_3^k(1+hk)^kh^{-k}
\end{equation*}
We use \eqref{eq:e} and \eqref{eq:f} again to get
\begin{equation*}
|R_k|\leqslant(k+1)^n(C_3ne\delta)^k(1+\frac{1}{hk})^k\leqslant e^{\frac{1}{h}}(k+1)^n(C_3ne\delta)^k
\end{equation*}
Therefore as long as $\delta<(C_3ne)^{-1}$, $R_k\to0$ as $k\to\infty$, so $u$ is analytic on $B(x_0,\delta)$. Now we can extend $u$ holomorphically to $W=\{z\in\mathbb{C}^n:|z-x_0|<\delta\}$ by
\begin{equation}
u(z)=\sum_\alpha\frac{\partial^\alpha u(x_0)}{\alpha!}(z-x_0)^\alpha.
\end{equation}
Since
\begin{equation*}
\left|\frac{\partial^\alpha u(x_0)}{\alpha!}(z-x_0)^\alpha\right|\leqslant\frac{C_3^{|\alpha|}(1+h|\alpha|)^{|\alpha|}}{h^{|\alpha|}\alpha!}\delta^{|\alpha|}.
\end{equation*}
We apply \eqref{eq:e} for $s=1,t=h|\alpha|$ and \eqref{eq:f} to get
\begin{equation*}
\left|\frac{\partial^\alpha u(x_0)}{\alpha!}(z-x_0)^\alpha\right|\leqslant(C_3ne\delta)^{|\alpha|}
(1+\frac{1}{h|\alpha|})^{|\alpha|}\leqslant(C_3ne\delta)^{|\alpha|}e^{\frac{1}{h}}.
\end{equation*}
Thus
\begin{equation*}
|u(z)|\leqslant\sum_\alpha\left|\frac{\partial^\alpha u(x_0)}{\alpha!}(z-x_0)^\alpha\right|
\leqslant e^{\frac{1}{h}}\sum_\alpha(C_3ne\delta)^{|\alpha|}.
\end{equation*}
which gives \eqref{eq:comes} since $\delta<(C_3ne)^{-1}$.\\\\

Now we turn to the proof of (i)$\Leftrightarrow$(ii). We use the same type of deformation of the integral contour as in the proof that the projection of analytic wavefront set is the analytic singular support (see \cite{Sj}).\\
Proof of (ii)$\Rightarrow$(i): We have \eqref{eq:comes2} for $z$ in a neighborhood of $x_0$, say $\{z=y+it:|y-x_0|<2r,|t|<r\}$. For $|x-x_0|<r$, in the formula of FBI transform \eqref{eq:fbi},
\begin{equation*}
T_{\tilde{h}}u(x,\xi,h)=2^{-\frac{n}{2}}(\pi\tilde{h})^{-\frac{3n}{4}}\int e^{\frac{i}{\tilde{h}}(x-y)\xi-\frac{1}{2\tilde{h}}(x-y)^2}u(y)dy,
\end{equation*}
we deform the contour to
\begin{equation}
\Gamma_x:y\mapsto z=y+i\epsilon\chi(y)\frac{\xi}{|\xi|}.
\end{equation}
where $\chi\in C_0^\infty(\mathbb{R}^n), 0\leqslant\chi\leqslant1,\chi=1 $ on $|y-x|<\frac{r}{2}$, $\supp\chi\subset\{|y-x|<r\}$ and $\epsilon\in(0,r)$. Then along $\Gamma_x$,
\begin{equation*}
\left|e^{\frac{i}{\tilde{h}}(x-z)\xi-\frac{1}{2\tilde{h}}(x-z)^2}u(z)\right|\leqslant Ch^{-N}e^{\frac{C}{h}\epsilon\chi(y)-\frac{1}{\tilde{h}}\epsilon\chi(y)|\xi|+\frac{1}{2\tilde{h}}\epsilon^2\chi(y)^2-\frac{|x-y|^2}{2\tilde{h}}}.
\end{equation*}
Since
\begin{equation*}
\frac{C}{h}\epsilon\chi(y)-\frac{1}{\tilde{h}}\epsilon\chi(y)|\xi|+\frac{1}{2\tilde{h}}\epsilon^2\chi(y)^2-\frac{|x-y|^2}{2\tilde{h}}
\leqslant\frac{1}{\tilde{h}}\epsilon\chi(y)[C+\frac{\epsilon}{2}-|\xi|]-\frac{|x-y|^2}{2\tilde{h}}
\end{equation*}
we have if $|\xi|>C_0=C+\frac{\epsilon}{2}+\frac{\delta}{\epsilon}$,
\begin{equation*}
\left|e^{\frac{i}{\tilde{h}}(x-z)\xi-\frac{1}{2\tilde{h}}(x-z)^2}u(z)\right|\leqslant
\left\{\begin{array}{ll}
Ch^{-N}e^{-\frac{\delta}{\tilde{h}}}, & \text{ when } |y-x|<\frac{r}{2}\\
Ch^{-N}e^{-\frac{r^2}{8\tilde{h}}}, & \text{ when } |y-x|\geqslant\frac{r}{2}
\end{array}\right.
\end{equation*}
which shows \eqref{eq:fbies}.\\

Proof of (i)$\Rightarrow$(ii): We have
\begin{equation*}
\delta(x)=(2\pi h)^{-n}\int e^{\frac{i}{h}x\xi}d\xi
\end{equation*}
in the sense of oscillatory integral. Following Lebeau, we deform to the complex contour
\begin{equation}
\tilde{\Gamma}_x: \xi\mapsto\zeta=\xi+\frac{i}{2}|\xi|x
\end{equation}
Along $\tilde{\Gamma}_x$,
\begin{equation*}
d\zeta=a(x,\xi)d\xi, a(x,\xi)=1+\frac{i}{2}\sum_1^n\frac{x_j\xi_j}{|\xi|}.
\end{equation*}
Therefore in the sense of oscillatory integral,
\begin{equation}
\delta(x)=(2\pi h)^{-n}\int e^{\frac{i}{h}x\xi-\frac{1}{2h}|\xi|x^2}a(x,\xi)d\xi.
\end{equation}
Now we can write $u$ in the form of
\begin{equation}
u(x)=(2\pi h)^{-n}\iint e^{\frac{i}{h}(x-y)\xi-\frac{1}{2h}|\xi|(x-y)^2}a(x-y,\xi)u(y,h)dyd\xi.
\end{equation}
Let
\begin{equation*}
I(x,\xi,h)=\int e^{\frac{i}{h}(x-y)\xi-\frac{1}{2h}|\xi|(x-y)^2}a(x-y,\xi)u(y,h)dy,
\end{equation*}
We claim that
\begin{equation}\label{eq:expdecay}
I(x+it,\xi,h)=O(e^{\frac{C}{h}-\frac{c|\xi|}{h}}), C,c>0
\end{equation}
uniformly for $x+it$ in a complex neighborhood of $x_0$ and $0<h\leqslant h_0$. In fact,
\begin{equation}\label{eq:deint}
I(x+it,\xi,h)=e^{-\frac{t\xi}{h}+\frac{t^2|\xi|}{2h}}\int e^{\frac{i}{h}(x-y)(\xi-|\xi|t)-\frac{1}{2h}|\xi|(x-y)^2}a(x+it-y,\xi)u(y)dy
\end{equation}
It is easy to see when $|\xi|<C'$, $I(x+it,\xi,h)=O(e^{\frac{C}{h}})$, so we have \eqref{eq:expdecay}. Now we assume $|\xi|>C'$ where $C'$ we shall choose to be large later, then since $|t|<\epsilon$,
\begin{equation*}
(1-\epsilon)|\xi|<|\xi-|\xi|t|<(1+\epsilon)|\xi|.
\end{equation*}
Let $\tilde{h}=\frac{\mu h}{|\xi|}, \mu$ large and fixed later, then we can rewrite \eqref{eq:deint} as
\begin{equation*}
I(x+it,\xi,h)=e^{-\frac{t\xi}{h}+\frac{t^2|\xi|}{2h}}\int e^{\frac{i}{\tilde{h}}(x-y)\cdot(\frac{\tilde{h}}{h}(\xi-|\xi|t))-\frac{\mu}{2\tilde{h}}(x-y)^2}a(x+it-y,\xi)u(y)dy
\end{equation*}
Now we choose $C'>\mu>C_0(1-\epsilon)^{-1}$, then $\tilde{h}\leqslant h$ and
\begin{equation*}
\left|\frac{\tilde{h}}{h}(\xi-|\xi|t)\right|=\mu\frac{|\xi-|\xi|t|}{|\xi|}\geqslant(1-\delta)\mu>C_0
\end{equation*}
By Lemma \ref{pr:chfbi} (we notice that $y\mapsto a(x+it-y,\xi)$ has uniform tempered growth when $|t|$ is small) and \eqref{eq:fbies}, we have uniform exponential decay for the integral in \eqref{eq:deint} when $x+it$ is in a small complex neighborhood of $x_0$, and $|\xi|>C'$,
\begin{equation*}
I(x+it,\xi,h)\leqslant Ce^{-\frac{t\xi}{h}+\frac{t^2|\xi|}{2h}}e^{-\frac{\delta'}{\tilde{h}}}\leqslant Ce^{-\frac{c|\xi|}{h}}.
\end{equation*}
if we assume $|t|<\epsilon$ is small enough. This finishes the proof of \eqref{eq:expdecay}.

Now we can extend $u$ holomorphically to a complex neighborhood of $x_0$ simply by
\begin{equation}
u(z)=(2\pi h)^{-n}\int I(z,\xi,h)d\xi.
\end{equation}
since $I(z,\xi,h)$ is holomorphic and the integral is uniformly convergent. Furthermore,
\begin{equation}
|u(z)|\leqslant C(2\pi h)^{-n}\int e^{\frac{C}{h}-\frac{c|\xi|}{h}}d\xi\leqslant C_2e^{\frac{C}{h}}.
\end{equation}
which gives \eqref{eq:comes}.
\end{proof}

\begin{rem}
Since $e^{\frac{\xi^2}{2\tilde{h}}}T_{\tilde{h}}u(x,\xi;h)$ is holomorphic, we can replace condition (i) by the exponential decay of local $L^2$-norm of $T_{\tilde{h}}u(h)$.
\end{rem}

\subsection{Agmon estimates for the FBI transform}
\label{sec:ag}

We shall follow the approach in \cite{M}. First we recall the following theorem of microlocal exponential estimate from \cite[Corollary 3.5.3, $f=1$]{M}:

\begin{thm}
Suppose $p\in S_{2n}(1)$ can be extended holomorphically to
\begin{equation*}
\Sigma(a)=\{(x,\xi)\in\mathbb{C}^{2n}:|\Im x|<a,|\Im\xi|<a\}
\end{equation*}
such that
\begin{equation*}
\forall\alpha\in\mathbb{N}^{2n},\partial^\alpha p=O(1), \text{ uniformly in } \Sigma(a).
\end{equation*}
Assume also that the real-valued function $\psi\in S_{2n}(1)$ satisfies
\begin{equation*}
\sup_{\mathbb{R}^{2n}}|\nabla_x\psi|<a, \sup_{\mathbb{R}^{2n}}|\nabla_\xi\psi|<a.
\end{equation*}
Then
\begin{equation*}
\|e^{\psi/h}T_hP(x,hD)u\|^2=\|p(x-2\partial_z\psi,\xi+2i\partial_z\psi)e^{\psi/h}T_hu\|^2+O(h)\|e^{\psi/h}T_hu\|^2
\end{equation*}
uniformly for $u\in L^2(\mathbb{R}^n)$, $h>0$ small enough. Here $\partial_z=\frac{1}{2}(\partial_x+i\partial_\xi)$ is the holomorphic derivative with respect to $z=x-i\xi$.
\end{thm}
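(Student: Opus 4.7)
The plan is to pass to the FBI side via the intertwining of property (4), conjugate by $e^{\psi/h}$, and then show that on $v:=e^{\psi/h}T_hu$ the resulting pseudodifferential operator collapses to multiplication by $q(x,\xi):=p(x-2\partial_z\psi,\,\xi+2i\partial_z\psi)$, modulo an $L^2$-error of size $O(\sqrt h)\|v\|$. Squaring and absorbing the cross terms then produces the stated identity.

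First I would combine $T_hPu=\tilde p(x,\xi,hD_x,hD_\xi)T_hu$ from property (4) with the conjugation formula $e^{\psi/h}(hD_y)e^{-\psi/h}=hD_y+i\partial_y\psi$, $y=(x,\xi)$, to obtain --- via the Weyl calculus on $L^2(\mathbb{R}^{2n})$ for the $S_{4n}(1)$ symbol $\tilde p(x,\xi,x^*,\xi^*)=p(x-\xi^*,x^*)$ ---
\begin{equation*}
 e^{\psi/h}T_hPu \;=\; \tilde p^w\bigl(x,\xi,\,hD_x+i\partial_x\psi,\,hD_\xi+i\partial_\xi\psi\bigr)\,v\;+\;h\,R\,v,\qquad \|R\|_{L^2\to L^2}=O(1).
\end{equation*}
The hypotheses $\psi\in S_{2n}(1)$ and $|\nabla\psi|<a$ are precisely what is needed to keep the shifted arguments inside the strip $\Sigma(a)$ on which $p$ extends holomorphically with $\partial^\alpha p=O(1)$; without them the Weyl symbols of the right-hand side would not be under control.

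Next I would exploit near-holomorphy to reduce the remaining operator to a multiplier. By property (1), $T_hu(x,\xi)=e^{-\xi^2/(2h)}G(x-i\xi)$ with $G$ entire, so $v=e^{\Phi/h}G(x-i\xi)$ with $\Phi=\psi-\xi^2/2$, and hence
\begin{equation*}
 \partial_{\bar z}v \;=\; h^{-1}\bigl(\partial_{\bar z}\psi+i\xi/2\bigr)\,v,\qquad z=x-i\xi.
\end{equation*}
Taylor-expanding $\tilde p^w$ about the complex Lagrangian picked out by this $\bar\partial$-equation and iterating the resulting substitutions of multipliers for $hD_x,hD_\xi$ on $v$, each monomial in the expansion contributes at leading order its value on the Lagrangian, which in complex coordinates reads precisely $(x-2\partial_z\psi,\,\xi+2i\partial_z\psi)$, plus an $O(h)$ correction. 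Summing the Taylor series --- convergent thanks to $\partial^\alpha p=O(1)$ on $\Sigma(a)$ --- yields $\tilde p^w(\cdots)v=qv+hE$ with $\|E\|_{L^2}\leqslant C\|v\|_{L^2}$. Taking squares, using $|q|=O(1)$ and Cauchy--Schwarz on the cross terms, then produces the claim.

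The main obstacle, I expect, will be making the reduction in the preceding paragraph rigorous. The naive identity $hD_xT_hu=(\xi+ihD_\xi)T_hu$ from property (1) is destroyed by the non-holomorphic factor $e^{\psi/h}$, so a direct substitution is not available on $v$. Instead, the $\bar\partial$-equation for $v$ must be iterated through the Taylor expansion of the symbol, with each iteration generating a commutator whose symbol norm has to be controlled uniformly in $h$; this is the content of the microlocal exponential-weight calculus developed in Martinez's Chapter 3. Once that calculus is in place, the remaining steps are routine Weyl-calculus bookkeeping.
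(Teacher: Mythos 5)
First, a point of context: the paper does not prove this statement at all --- it is recalled verbatim from Martinez \cite[Corollary 3.5.3, $f=1$]{M}, and the remark that follows only records which uniformity properties can be extracted from Martinez's argument. So your proposal is not an alternative route to anything proved in the paper; it is a plan for reproving the quoted result, and it has to be judged on whether it actually does so.

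Judged that way, there is a genuine gap, and it sits exactly where you locate your ``main obstacle''. The $\bar\partial$-relation for $v=e^{\psi/h}T_hu$ controls only the single complex combination $(hD_x-ihD_\xi)v=(\xi-2i\partial_{\bar z}\psi)v$; it does not permit substituting multipliers for $hD_x$ and $hD_\xi$ separately, so ``iterating the substitutions'' through the Taylor expansion of $\tilde p^w$ is not available as an operator identity. The complementary combination $hD_x+ihD_\xi$ can only be traded for a multiplier after it is moved onto the other factor inside an inner product (it is, modulo weight terms, the adjoint of $hD_x-ihD_\xi$), which is precisely why the theorem is a quadratic-form statement about $\|\cdot\|^2$ and why $\partial_z\psi$ rather than $\partial_{\bar z}\psi$ appears. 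Moreover, your intermediate claim $\tilde p^w(\cdots)v=qv+hE$ with $\|E\|\lesssim\|v\|$ is false in general: already in the model case $\psi=0$, $P=hD_{x_1}$ (boundedness of the symbol is beside the point here), the discrepancy $T_hPu-\xi_1T_hu=ihD_{\xi_1}T_hu$ has norm of order $h^{1/2}\|u\|$ on coherent states, not $h\|u\|$. Consequently ``Cauchy--Schwarz on the cross terms'' can only yield an $O(h^{1/2})\|v\|^2$ error; the stated $O(h)$ gain occurs because the order-$h^{1/2}$ pieces enter the squared norm only through commutator/antisymmetric combinations (in the model case $2\Re\langle \xi_1 v,ihD_{\xi_1}v\rangle$ reduces to a commutator of size $O(h)\|v\|^2$), equivalently through Martinez's contour-deformation argument and the projection onto the weighted holomorphic space. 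That is exactly the step you defer to ``the microlocal exponential-weight calculus developed in Martinez's Chapter 3'' --- i.e.\ to the very source the paper cites in place of a proof --- so as written the proposal assumes, rather than establishes, the theorem.
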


\begin{rem}
From the argument in \cite{M}, we can also see that this estimate only depends on the seminorms of $p$ and $\psi$ in $S_{2n}(1)$. In other words, if $p$ and $\psi$ varies in a way such that every $\sup\limits_{\Sigma(a,b)}|\partial^\alpha p|$ and $\sup\limits_{\mathbb{R}^{2n}}|\partial^\alpha\psi|$ is uniformly bounded, then the estimate is uniform in $p$ and $\psi$. Furthermore, we only need that $p$ can be extended holomorphically to the set $\{(y,\eta)\in\mathbb{C}^{2n}:\exists (x,\xi)\in\supp\psi, |y-x|<\sup|\nabla\psi|, |\eta-\xi|<\sup|\nabla\psi|\}$. Also here $P(x,hD)$ can be replaced by any quantization as in \cite{M},\cite{Z}.
\end{rem}

Now we consider a semiclassical differential operator $P=P(x,hD_x)$ of order $m$ with analytic coefficients, defined in a neighborhood $X$ of $x_0$. We assume the symbol $$p(x,\xi;h)=\sum_{|\alpha|\leqslant m}a_\alpha(x;h)\xi^\alpha$$
can be extended holomorphically to a fixed complex neighborhood
$$\Sigma_\delta=\{(x,\xi)\in\mathbb{C}^n\times\mathbb{C}^n:|\Re x-x_0|<\delta, |\Im x|<\delta, |\Im\xi|<\delta\}$$
and also that $P$ is classically elliptic in $\Sigma_\delta$, in the sense that the principal symbol
$$p_0(x,\xi)=\sum_{|\alpha|=m}a_\alpha\xi^\alpha$$
satisfies
$$|p_0(x,\xi)|\geqslant\frac{1}{C_0}\langle\Re\xi\rangle^m, \text{ for } (x,\xi)\in\Sigma_\delta, |\Re\xi|>C$$

\begin{thm}
Let $P$ be as above and assume $\{u(h)\}_{0<h\leqslant h_0}$ is a family of functions defined in $X$ such that
$$P(x,hD_x)u(h)=0 \text{ in } X$$
and
$$\|u(h)\|_{L^2(X)}\leqslant Ch^{-N}$$
Then there exists an open neighborhood $U\subset\subset X$ of $x_0$, such that for all $x\in U$,
\begin{equation}\label{eq:cauchy}
|(hD)^\alpha u(x)|\leqslant C^{|\alpha|}(1+h|\alpha|)^{|\alpha|}\|u(h)\|_{L^\infty}.
\end{equation}
\end{thm}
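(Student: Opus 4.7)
My plan is to reduce, by the equivalence in Proposition~\ref{pr:equ}, to verifying condition (i), and to derive the FBI decay from the microlocal exponential estimate of \cite{M} together with the ellipticity assumption on $P$. Concretely, it will suffice to exhibit a neighborhood $V$ of $x_0$ and constants $C_0,C_1,\delta>0$ such that
$$|T_{\tilde h}u(x,\xi)|\leqslant C_1 e^{-\delta/\tilde h}\|u\|_{L^\infty}\qquad \text{for } 0<\tilde h\leqslant h\leqslant h_0,\ x\in V,\ |\xi|\geqslant C_0.$$
First I would globalize $u$ by choosing $\chi\in C_c^\infty(X)$ with $\chi\equiv 1$ on a neighborhood $V_1\Subset X$ of $x_0$ and setting $v=\chi u\in L^2(\mathbb{R}^n)$. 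Since $Pu=0$ on $X$, one has $Pv=[P,\chi]u$, whose $x$-support lies in $X\setminus V_1$, and $\|v\|_{L^2}=O(h^{-N})$.

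Next, fix a target $(x^*,\xi^*)$ with $x^*\in V\Subset V_1$ and $|\xi^*|\geqslant C_0$, and build a weight $\psi=\psi_{(x^*,\xi^*)}\in C_c^\infty(\mathbb{R}^{2n})$ supported in a small phase-space ball around $(x^*,\xi^*)$, with $\psi(x^*,\xi^*)=\delta_0>0$, $\sup|\nabla\psi|$ small compared to the analyticity radius $\delta$, and with the $x$-projection of $\supp\psi$ contained in $V_1$. For $C_0$ large and $\sup|\nabla\psi|$ small, the ellipticity of $p$ on $\Sigma_\delta$ persists under the contour deformation, giving
$$|p(x-2\partial_z\psi,\xi+2i\partial_z\psi)|\geqslant c\langle\Re\xi\rangle^m \qquad \text{on } \supp\psi.$$
Applying the microlocal exponential estimate to $v$ with FBI parameter $\tilde h$,
$$\|e^{\psi/\tilde h}T_{\tilde h}Pv\|^2 = \|p(x-2\partial_z\psi,\xi+2i\partial_z\psi)e^{\psi/\tilde h}T_{\tilde h}v\|^2 + O(\tilde h)\|e^{\psi/\tilde h}T_{\tilde h}v\|^2,$$
the symbol lower bound on $\supp\psi$ together with $\psi\equiv 0$ off its support lets me absorb the $O(\tilde h)$ term and deduce
$$\|e^{\psi/\tilde h}T_{\tilde h}v\|_{L^2(\supp\psi)}\leqslant C\bigl(\|e^{\psi/\tilde h}T_{\tilde h}Pv\|_{L^2}+\|T_{\tilde h}v\|_{L^2}\bigr).$$

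The second term is $O(\tilde h^{-N})$. For the first, the $x$-support of $Pv$ lies in $X\setminus V_1$ while $\psi$ has $x$-projection in $V_1$, so the Gaussian factor $e^{-|x-y|^2/2\tilde h}$ in the FBI kernel will yield $\|e^{\psi/\tilde h}T_{\tilde h}Pv\|_{L^2}=O(\tilde h^{-N}e^{-\delta_1/\tilde h})$ for some $\delta_1>0$ provided $\sup\psi<\delta_1$. Restricting to $\{\psi\geqslant\delta_0/2\}$ this yields $L^2$-exponential decay of $T_{\tilde h}v$ near $(x^*,\xi^*)$; since $e^{\xi^2/(2\tilde h)}T_{\tilde h}v$ is holomorphic in $z=x-i\xi$, Cauchy's inequality converts this $L^2$ bound into the pointwise bound $|T_{\tilde h}v(x^*,\xi^*)|\leqslant Ce^{-\delta'/\tilde h}\|u\|_{L^\infty}$. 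Finally $T_{\tilde h}(u-v)=T_{\tilde h}((1-\chi)u)$ is $O(e^{-\delta''/\tilde h})$ for $x$ near $x_0$ directly from the Gaussian decay of the FBI kernel in $(x-y)$. This establishes (i) for $v$ hence for $u$, and Proposition~\ref{pr:equ} then yields \eqref{eq:cauchy}. The main obstacle will be the simultaneous calibration of $\chi$ and $\psi$: $\sup|\nabla\psi|$ must stay smaller than the analyticity radius and ellipticity margin, $\psi(x^*,\xi^*)$ must be positive while $\sup\psi$ must remain below the exponential gain $\delta_1$ produced by the $x$-separation between $\supp\nabla\chi$ and the $x$-projection of $\supp\psi$, so that the commutator contribution is genuinely absorbed by the weight.
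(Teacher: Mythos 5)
Your overall strategy is the paper's: verify condition (i) of Proposition \ref{pr:equ} (or its local $L^2$ variant from the remark following it, which already makes your Cauchy-inequality step to a pointwise bound unnecessary) by combining the microlocal exponential estimate of \cite{M} with ellipticity of $p$ on $\Sigma_\delta$. But there is a genuine gap at your central display. The estimate quoted from \cite{M} applies to $\Op_{\tilde h}$ of a symbol in $S_{2n}(1)$, holomorphic with \emph{bounded} derivatives in $\Sigma(a)$, with the \emph{same} parameter $\tilde h$ in the operator and in $T_{\tilde h}$. You apply it with FBI parameter $\tilde h$ to $P(x,hD_x)$, which is quantized at the different parameter $h$ and has a symbol of order $m$. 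Rewritten as an $\tilde h$-quantization, $P(x,hD_x)$ has symbol $\sum_{|\alpha|\leqslant m} a_\alpha(x;h)(h/\tilde h)^{|\alpha|}\xi^\alpha$, whose coefficients blow up as $\tilde h/h\to 0$, so the identity you write down is not a consequence of the quoted theorem; and this is exactly where the requirement ``for all $0<\tilde h\leqslant h$'' in condition (i) has teeth. The paper handles this by first rescaling, $\tilde P=(\tilde h/h)^m\chi_1(x)P(x,hD_x)$ (harmless since $Pu=0$), whose $\tilde h$-symbol has coefficients $(\tilde h/h)^{m-|\alpha|}a_\alpha$ bounded and the same principal part, hence still classically elliptic on a slightly smaller $\Sigma_{\delta/4}$, and then normalizing by $\langle\tilde h D_x\rangle^{-m}$ to obtain a symbol $q\in S_{2n}(1)$ before invoking the exponential estimate. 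Without these two steps your argument does not go through as written.

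A secondary issue is your treatment of the error term $\|e^{\psi/\tilde h}T_{\tilde h}([P,\chi]u)\|$. Since $[P,\chi]u$ contains derivatives of $u$ up to order $m-1$ (at parameter $h$), the claimed bound $O(\tilde h^{-N}e^{-\delta_1/\tilde h})$ is not immediate from the Gaussian alone: one must integrate by parts onto the FBI kernel and control the resulting factors of $h/\tilde h$ and $|\xi|$ against $e^{-|x-y|^2/2\tilde h}$, and then calibrate $\sup\psi<\delta_1$; this is repairable but is real work, not a remark. The paper avoids the error term altogether with two nested cutoffs, $\chi_1$ placed inside the operator and $\chi_2$ applied to $u$, chosen so that $\chi_1\equiv 0$ on $\supp\nabla\chi_2$; then $Qv=0$ exactly and the exponential estimate is applied with vanishing left-hand side. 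I would either adopt that device or carry out the commutator estimate in detail, in addition to inserting the rescaling and normalization described above.
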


\begin{rem}
Also by the standard semiclassical elliptic estimates, (e.g. \cite[Lemma 7.10]{Z}), we know $\|u\|_{L^\infty}\leqslant Ch^{-n/2}\|u\|_{L^2}$. So we also have $\|u(h)\|_{L^\infty}\leqslant Ch^{-M}$.
\end{rem}

\begin{proof}
First for $0<\tilde{h}\leqslant h$, we write
$$\tilde{P}(x,\tilde{h}D_x)=(\tilde{h}/h)^m\chi_1(x)P(x,hD_x)$$
so
$$\tilde{p}(x,\xi)=\sum_{|\alpha|\leqslant m}(\tilde{h}/h)^{m-|\alpha|}\chi_1(x)a_\alpha(x;h)\xi^\alpha$$
where $\chi_1\in C_0^\infty(\mathbb{R}^n)$ is a cut-off function satisfying $\chi_1(x)=1$ for $|x-x_0|<\frac{\delta}{4}$; 0 for $|x-x_0|>\frac{\delta}{2}$. Therefore $\tilde{p}(x,\xi)$ can still be extended holomorphically to $\Sigma_{\frac{\delta}{4}}$ and classically elliptic in $\Sigma_{\frac{\delta}{4}}$. Now let $Q=\langle\tilde{h}D_x\rangle^{-m}\circ\tilde{P}(x,\tilde{h}D_x)$, then we can write $Q(x,\tilde{h}D_x)=\Op_{\tilde{h}}^1(q)$ where
$$q(y,\xi)=\langle\xi\rangle^{-m}\sum_{|\alpha|\leqslant m}\sum_{\beta\leqslant\alpha}(\tilde{h}D_y)^\beta(\chi_1(y)a_\alpha(y;h))(-\xi)^{\alpha-\beta}.$$
In fact,
$$Qv=(2\pi\tilde{h})^{-n}\iint e^{\frac{i}{h}(x-y)\xi}\langle\xi\rangle^{-m}(\tilde{P}v)(y)dyd\xi
=(2\pi\tilde{h})^{-n}\iint e^{\frac{i}{h}(x-y)\xi}q(y,\xi)v(y)dyd\xi.$$
Therefore $q$ can also be extended holomorphically to $\Sigma_{\frac{\delta}{4}}$ and
$$|\partial^\alpha q|=O_\alpha(1), \text{ in } \Sigma_{\frac{\delta}{4}}.$$
Furthermore $q$ is elliptic in $\Sigma_{\frac{\delta}{3}}$ and thus for $\tilde{h}$ small,
$$|q(x,\xi)|\geqslant\frac{1}{C}, \text{ for } (x,\xi)\in\Sigma_{\frac{\delta}{4}}, |\xi|>C_0.$$
Now let $v(x)=\chi_2(x)u(x)$, where $\chi_2(x)\in C_0^\infty(\mathbb{R}^n)$ is a cut-off function satisfying $0\leqslant\chi_2\leqslant1$, $\chi_2(x)=1$ for $|x-x_0|<\frac{\delta}{2}$; 0 for $|x-x_0|>\frac{3\delta}{4}$. Therefore $v\in L^2(\mathbb{R}^n)$ and
$$Qv=0$$
$$\|v\|_{L^2(\mathbb{R}^n)}\leqslant\|u\|_{L^2(X)}\leqslant Ch^{-N}.$$
Now we choose $\psi=\psi(x,\xi)\in S_{2n}(1)$ such that $\supp\psi\subset U_1=\{|x-x_0|<\frac{\delta}{8},|\xi|>2C_0\}$ and $\psi=c>0$ on $U_2=\{|x-x_0|<\frac{\delta}{16},|\xi|>3C_0\}$ and $\sup_{\mathbb{R}^{2n}}|\nabla_{(x,\xi)}\psi|<\frac{\delta}{16}$. Then if $(x,\xi)\in\supp\psi$, then for any $(y,\eta)\in\mathbb{C}^{2n}$ satisfying $|y-x|<\frac{\delta}{16},|\eta-\xi|<\frac{\delta}{16}$, we have $(y,\eta)\in\Sigma_{\frac{\delta}{4}}$. This allows us to apply the microlocal exponential estimate for $\psi$ and $q$:
\begin{equation*}
0=\|e^{\psi/\tilde{h}}T_{\tilde{h}}Qv\|^2=\|q(x-2\partial_z\psi,\xi+2i\partial_z\psi)e^{\psi/\tilde{h}}T_{\tilde{h}}v\|^2
+O(\tilde{h})\|e^{\psi/\tilde{h}}T_{\tilde{h}}v\|^2
\end{equation*}
Therefore
\begin{equation*}
\|q(x-2\partial_z\psi,\xi+2i\partial_z\psi)e^{\psi/\tilde{h}}T_{\tilde{h}}v\|_{L^2(U_1)}^2
=O(\tilde{h})\|e^{\psi/{\tilde{h}}}T_{\tilde{h}}v\|^2
\end{equation*}
For $(x,\xi)\in U_1$, $(x-2\partial_z\psi,\xi+2i\partial_z\psi)\in\Sigma_{\frac{\delta}{4}}$ and $|\Re(\xi+2i\partial_z\psi)|\geqslant2C_0-\frac{\delta}{16}>C_0$, so
$$|q(x-2\partial_z\psi,\xi+2i\partial_z\psi)|\geqslant\frac{1}{C}.$$
Therefore
\begin{equation*}
\|e^{\psi/\tilde{h}}T_{\tilde{h}}v\|_{L^2(U_1)}^2=O(\tilde{h})\|e^{\psi/{\tilde{h}}}T_{\tilde{h}}v\|^2
\end{equation*}
When $\tilde{h}$ is small, we have
\begin{equation*}
\|e^{\psi/\tilde{h}}T_{\tilde{h}}v\|_{L^2(U_1)}^2=O(\tilde{h})\|e^{\psi/{\tilde{h}}}T_{\tilde{h}}v\|^2_{L^2(\mathbb{R}^{2n}\setminus U_1)}
\end{equation*}
Since $\psi=0$ outside $U_1$, we have
\begin{equation*}
\|e^{\psi/\tilde{h}}T_{\tilde{h}}v\|_{L^2(U_1)}^2\leqslant C\tilde{h}\|T_{\tilde{h}}v\|^2_{L^2(\mathbb{R}^{2n}\setminus U_1)}\leqslant C\tilde{h}\|T_{\tilde{h}}v\|^2=C\tilde{h}\|v\|^2\leqslant C\tilde{h}\|u\|^2.
\end{equation*}
Since $\psi=c>0$ on $U_2\subset U_1$,
$$\|T_{\tilde{h}}v\|_{L^2(U_2)}\leqslant Ce^{-\frac{c}{\tilde{h}}}\|u\|^2.$$
Now by Proposition \ref{pr:equ} and the remark after it, we can conclude the proof of \eqref{eq:cauchy}.
\end{proof}

\begin{rem}
The same argument can also be applied to elliptic pseudodifferential operators on $\mathbb{R}^n$ with symbol
\begin{equation}
p(x,\xi;h)\sim\sum_{j=0}^\infty a_m(x,\xi/|\xi|;h)|\xi|^{m-j}
\end{equation}
which can be extended holomorphically to $\Sigma(a)$ for some $a>0$ and is classically elliptic in $\Sigma(a)$. In this case, we do not need any cut-off function and the weight function $\psi$ can be chosen to only depend on $\xi$. Then the solutions of $P(x,hD_x)u(h)=0$ in $\mathbb{R}^n$ also satisfies the semiclassical Cauchy estimates \eqref{eq:cauchy}.
\end{rem}

\section{Doubling property}
\label{sec:dp}

In this section, we use a Carleman-type estimate to prove the so-called doubling property of solutions of semiclassical Schrodinger equations on a compact Riemannian manifold. We do not assume the analyticity of either the manifold or the potential. See \cite{B-C} for a general setting where the potential is only assumed to be $C^1$. From now on, for simplicity, we shall use $\|u\|_U$ to represent the $L^2$-norm of the function $u$ in the set $U$.

\begin{thm}\label{pr:dp}
Suppose $(M,g)$ is a compact Riemannian manifold, $V:M\to\mathbb{R}$ is a smooth function. Let $P(h)=-h^2\Delta_g+V(x)$ and $P(h)u(h)=E(h)u(h)$ where $E(h)\to E_0$ as $h\to0$, then\\
(i) (Tunneling) For every $r>0$, there exists $c=c(r)>0$ depending on $r$ (and $(M,g),V$) such that
\begin{equation}\label{eq:tun}
\|u\|_{L^2(B(p,r))}\geqslant e^{-c(r)/h}\|u\|_{L^2(M)}
\end{equation}
for every $p\in M$, $0<h<h_0$.\\
(ii) (Doubling Property) There exists $c_0>0$ depending only on $(M,g)$ and $V$ and such that for every $c_1>0$,
\begin{equation}\label{eq:dp}
\|u\|_{L^2(B(p,r))}\geqslant e^{-c_0/h}\|u\|_{L^2(B(p,2r))}
\end{equation}
uniformly for $p\in M, r>c_1h$ and $0<h<h_1$.\\
\end{thm}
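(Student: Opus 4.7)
The plan is to derive both statements from a semiclassical Carleman estimate for $P(h)-E(h)$, combined with chain-of-balls propagation for (i) and a scale-adapted three-ball inequality for (ii). In geodesic normal coordinates around $p\in M$, choose a radial weight $\varphi(x)=f(|x|)$ with $f$ convex in $\log|x|$, e.g.\ $f(r)=-\log r+\frac{1}{2C_1}(\log r)^2$. This is H\"ormander's pseudoconvexity condition for the principal symbol $|\xi|_g^2$, and a positive-commutator / integration-by-parts argument yields
\begin{equation*}
\tfrac{1}{h}\|e^{\varphi/h}w\|^2 + h\|e^{\varphi/h}\nabla w\|^2 \leq Ch\|e^{\varphi/h}(P(h)-E(h))w\|^2
\end{equation*}
for $w\in C_c^\infty(B(p,r_0)\setminus\{p\})$, with constants uniform in $p$ by compactness of $M$.

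Applying this with $w=\chi u$, $\chi$ a radial cutoff equal to $1$ on $\{r_1\leq|x|\leq r_2\}$ and supported in $\{r_1/2\leq|x|\leq 2r_2\}$, and using $P(\chi u)=[P,\chi]u$ (supported in the two thin shells where $\nabla\chi\neq 0$), one minimizes $e^{\varphi/h}$ on the middle annulus and maximizes it on the two shells to obtain the semiclassical three-ball inequality
\begin{equation*}
\|u\|_{B(p,r_2)} \leq e^{C/h}\|u\|_{B(p,r_1)}^{\alpha}\|u\|_{B(p,r_3)}^{1-\alpha},\qquad \alpha=\alpha(r_1,r_2,r_3)\in(0,1),
\end{equation*}
valid for $c_1 h\leq r_1<r_2<r_3<r_0$; the threshold $r_1\geq c_1 h$ is the scale below which the commutator positivity is destroyed by $O(h)$ losses.

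For (i), normalize $\|u\|_{L^2(M)}=1$ and cover $M$ by $N(r)$ balls of radius $r/4$; pigeonhole produces some $B(q_0,r/4)$ with $\|u\|_{B(q_0,r/4)}\geq N(r)^{-1/2}$. Since $M$ is connected and compact, $p$ can be joined to $q_0$ by a chain of at most $N(r)$ balls whose consecutive pairs fit inside overlapping three-ball configurations, and iterating the three-ball inequality along the chain propagates the lower bound from $B(q_0,r/4)$ to $B(p,r)$, yielding $\|u\|_{B(p,r)}\geq e^{-c(r)/h}$ with $c(r)$ determined by the chain length and the accumulated $\alpha$-exponents. For (ii), apply the three-ball inequality at scale $r$ after rescaling $y=(x-p)/r$ so that the Carleman weight $\varphi$ depends on $|x|/r$; the resulting inequality at the unit scale has an absolute exponent $\alpha_0\in(0,1)$ and a prefactor $e^{Cr/h}\leq e^{Cr_0/h}$, uniform for $r\in(c_1 h,r_0)$. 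Setting $d(r)=\log(\|u\|_{B(p,2r)}/\|u\|_{B(p,r)})$, this rearranges into a recursion of the form $d(r)\leq C/h+\tfrac{1-\alpha_0}{\alpha_0}\sum_j d(2^j r)$; combined with the uniform bound $d(r_0/K)\leq C_0/h$ obtained from part (i) at the fixed reference scale $r_0$, a telescoping / maximum-principle argument (with $\alpha_0$ chosen close enough to $1$ that the recursion contracts) yields $d(r)\leq c_0/h$ uniformly for $r>c_1 h$.

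The main obstacle is the scale-adapted Carleman estimate used in (ii): the weight $\varphi$ must be rescaled to the radius $r$ so that (a) positive commutator holds uniformly in both $p$ and $r$, (b) the three-ball exponent $\alpha_0$ is an absolute constant large enough to make the telescoping recursion contractive, and (c) the prefactor remains of the form $e^{O(1/h)}$ down to the scale $r_1=c_1h$. Below this threshold the semiclassical commutator argument loses its positivity margin, which is precisely the origin of the restriction $r>c_1 h$ in the statement.
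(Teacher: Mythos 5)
Your treatment of (i) is essentially the paper's own argument: a Carleman inequality on annuli plus the Donnelly--Fefferman chain of overlapping balls anchored at a ball produced by pigeonholing, and that part is sound. The gap is in (ii), at the step where the three-ball inequality is converted into a doubling constant $e^{c_0/h}$ uniform down to scale $r\sim c_1h$. Writing $d(\rho)=\log\bigl(\|u\|_{B(p,2\rho)}/\|u\|_{B(p,\rho)}\bigr)$, a three-ball inequality with radii $\rho,2\rho,2^m\rho$ and exponent $\alpha$ gives
\[
d(\rho)\leq \frac{C}{h}+(1-\alpha)\sum_{j=0}^{m-1}d(2^j\rho),
\]
and for the Hadamard-type exponent $\alpha=(m-1)/m$ that fixed-ratio weights of $-\log|x|$ type actually yield (cutoff losses only make $\alpha$ smaller), the factor $(1-\alpha)$ times the number of terms is exactly $1$: the recursion does not contract, and assuming a uniform bound $D$ on doubling at all larger scales returns only $d(\rho)\leq D+C'/h$. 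Propagating from scale $r$ up to the fixed scale $r_0$ then takes $\sim\log(r_0/r)\sim\log(1/h)$ steps and accumulates an additive cost of order $h^{-1}\log(1/h)$, which is strictly weaker than \eqref{eq:dp}. Moreover $\alpha_0$ is not a free parameter you can ``choose close enough to $1$'': it is dictated by the radii and the weight, and pushing it toward $1$ forces the weight to be nearly flat on the inner annulus, which destroys the pseudoconvexity (the commutator positivity) and blows up the Carleman constant. Making the dyadic-iteration route work is possible, but it requires exactly the careful log-convex weight bookkeeping of Bakri--Casteras \cite{B-C} (the refinement the paper's remark alludes to), which your proposal asserts rather than carries out.

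The paper avoids the iteration entirely: it proves one Carleman estimate on the single large annulus $B(p,R)\setminus B(p,r/2)$ with $R$ fixed, using the weight $\varphi=\tau e^{\mu(A-|x|)}$ with the scale-adapted parameter $\tau\sim r^{-1}\leq Ch^{-1}$ (this is where $r>c_1h$ enters --- it keeps $\tau\lesssim h^{-1}$ so the conjugated operator stays within the semiclassical calculus, rather than a loss of ``positivity margin'' at scale $h$). With this choice the weight drop across the inner annuli is $\varphi(r/2)-\varphi(2r)=O(1)$ uniformly in $r$, while the drop across the outer annuli at scale $R$ is of size $R/r\gg1$, large enough to absorb the outer boundary term using the tunneling estimate (i); the doubling bound with $c_0$ independent of $r$ is then read off in a single application. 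If you wish to keep your formulation, either adopt this scale-adapted weight (one shot, no dyadic iteration), or quantify the log-convexity gain of your weight $-\log|x|+(\log|x|)^2/(2C_1)$ precisely enough to show that the per-scale Carleman costs sum to $O(1/h)$ rather than $O(h^{-1}\log(1/h))$; as written, the ``telescoping / maximum-principle argument'' is precisely the missing proof.
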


\begin{rem}
We can remove the condition $r>c_1h$ in part (ii) by carefully constructing a weight involving logarithmic terms near the origin in Carleman estimates. For the details, see \cite{B-C}. For our purpose, the weak version above will be sufficient.
From now on, in this section, every constant will depend on $(M,g)$ and $V$, but we shall not write it out explicitly.
\end{rem}

\subsection{Carleman estimates}
\label{sec:ce}

We start by writing the equation in the local coordinates. Let $r_1$ be the injective radius of $M$, then for any $p\in M$, we write $P(h)u(h)=0$ on $B(p,r_1)$ in the normal geodesic coordinates centered at $p$ still as
\begin{equation}\label{eq:loc}
[-h^2\Delta_g+V(x)-E]u=0, x\in B(0,r_1).
\end{equation}

Let $p=|\xi|^2_g+V(x)-E$ be the symbol of $P=-h^2\Delta_g+V(x)-E$. We wish to conjugate $P$ by a weight $e^{\varphi/h}$ to get an operator $P_\varphi=e^{\varphi/h}Pe^{-\varphi/h}$ whose symbol $p_\varphi=|\xi+i\partial\varphi(x)|^2_{g(x)}+V(x)-E$ satisfies H\"{o}rmander's hypoelliptic condition:
\begin{equation}
\text{ if } p_\varphi=0, \text{ then } \frac{i}{2}\{p_\varphi,\bar{p}_\varphi\}=\{\Re p_\varphi,\Im p_\varphi\}>0.
\end{equation}
on $B(0,R)\setminus B(0,r)$. Here $\{\cdot,\cdot\}$ denotes the Poisson bracket.

Since
$$\Re p_\varphi=|\xi|^2_g-|\partial\varphi|^2_g+V-E, \Im p_\varphi=2\langle\xi,\partial\varphi\rangle_g.$$
We have
\begin{equation*}
\begin{split}
\{\Re p_\varphi,\Im p_\varphi\}=&\langle \partial_\xi(\Re p_\varphi),\partial_x(\Im p_\varphi)\rangle-\langle\partial_x(\Re p_\varphi),\partial_\xi(\Im p_\varphi)\rangle\\
=&4\langle g\xi,\partial^2\varphi g\xi\rangle+4\langle g\partial\varphi,\partial^2\varphi g\partial\varphi\rangle-2\langle g\partial\varphi,\partial V\rangle\\
&+4\langle g\xi,\langle\xi,\partial\varphi\rangle_{\partial g}\rangle-2\langle g\partial\varphi,|\xi|^2_{\partial g}-|\partial\varphi|^2_{\partial g}\rangle
\end{split}
\end{equation*}
We set $\varphi=\tau e^{\mu\psi}$, where $\tau,\mu\geqslant1$ is a large constant to be chosen later. Then
$$\partial\varphi=\tau\mu e^{\mu\psi}\partial\psi, \partial^2\varphi=\tau e^{\mu\psi}(\mu^2\partial\psi\otimes\partial\psi+\mu\partial^2\psi).$$
Therefore
\begin{equation*}
\begin{split}
\frac{i}{2}\{p_\varphi,\bar{p}_\varphi\}=&\; 4\tau\mu^2e^{\mu\psi}\langle g\xi,\partial\psi\rangle^2+4\tau\mu e^{\mu\psi}\langle g\xi,\partial^2\psi g\xi\rangle\\
&+4\tau^3\mu^4e^{3\mu\psi}\langle\partial\psi,g\partial\psi\rangle^2+4\tau^3\mu^3e^{3\mu\psi}\langle g\partial\psi,\partial^2\psi g\partial\psi\rangle\\
&-2\tau\mu e^{\mu\psi}\langle g\partial\psi,\partial V\rangle+4\tau\mu e^{\mu\psi}\langle g\xi,\langle\xi,\partial\psi\rangle_{\partial g}\rangle\\
&-2\tau\mu e^{\mu\psi}\langle g\partial\psi,|\xi|^2_{\partial g}\rangle+2\tau^3\mu^3e^{3\mu\psi}\langle g\partial\psi,|\partial\psi|^2_{\partial g}\rangle
\end{split}
\end{equation*}

When $\Re p_\varphi=\Im p_\varphi=0$, we have
\begin{equation*}
|\xi|^2_g=|\partial_\varphi|_g^2+V(x)-E=\tau^2\mu^2e^{2\mu\psi}|\partial\psi|_g^2+V-E
\end{equation*}
\begin{equation*}
\langle\xi,\partial\varphi\rangle_g=2\tau\mu e^{\mu\psi}\langle g\xi,\partial\psi\rangle=0
\end{equation*}
thus
\begin{equation*}
\xi=\tau\mu e^{\mu\psi}|\partial\psi|_g\eta, \text{ where } C_1^{-1}\leqslant|\eta|\leqslant C_1.
\end{equation*}
We shall choose $\psi$ to be a radial and radially decreasing function which equals to $A-|x|$ on $B(0,R)\setminus B(0,r)$ so that
\begin{equation*}
\psi\geqslant 1, C_2^{-1}\leqslant|\partial\psi|_g\leqslant C_2 \text{ and } |\partial^2\psi|\leqslant C_2 \text{ on } B(0,R)\setminus B(0,r)
\end{equation*}
Hence
\begin{equation*}
\frac{i}{2}\{p_\varphi,\bar{p}_\varphi\}\geqslant4\tau^3\mu^4e^{3\mu\psi}[|\partial\psi|_g^4-C_3\mu^{-1}]\geqslant C_4\tau^{-1}e^{-\mu\psi}\langle\psi\rangle^4
\end{equation*}
where $C_3,C_4$ is a constant depending only on $\psi$ when $\mu$ and $\tau$ are large depending on $\psi$. Now we can prove the basic Carleman estimate:

\begin{lem}
For any $v\in C_0^\infty(B(0,R)\setminus B(0,r))$,
\begin{equation}\label{eq:bcar}
C_5\tau^{\frac{1}{2}}\|P_\varphi v\|\geqslant h^{\frac{1}{2}}\|v\|_{H_h^2}.
\end{equation}
where $C_5$ is a constant only depending on $\mu, \psi$.
\end{lem}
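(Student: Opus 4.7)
The plan is to apply the classical commutator method. Decompose $P_\varphi = A + iB$ where $A := \tfrac12(P_\varphi + P_\varphi^*)$ and $B := \tfrac{1}{2i}(P_\varphi - P_\varphi^*)$ are formally self-adjoint, with principal symbols $a = \Re p_\varphi$ and $b = \Im p_\varphi$. The preceding symbol computation is exactly the statement that $\{a,b\} = \tfrac{i}{2}\{p_\varphi,\bar p_\varphi\}$ is bounded below on the characteristic set $\{a = b = 0\}$ once $\mu,\tau$ are taken large enough.

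For $v \in C_0^\infty(B(0,R)\setminus B(0,r))$ the identity
$$\|P_\varphi v\|^2 \;=\; \|A v\|^2 + \|B v\|^2 + (i[A,B]v, v)$$
holds with no boundary contributions, and by the semiclassical symbolic calculus $i[A,B] = h\,\Op_h(\{a,b\}) + O_{L^2\to L^2}(h^2)$. Thus the whole problem reduces to quantizing the pointwise bound on $\{a,b\}$. I would use the standard H\"ormander-style trick: since $\{a,b\}>0$ on $\{a=b=0\}$ while $a^2+b^2$ is bounded below away from that set, one can find symbols $\alpha,\beta$ (supported microlocally near the characteristic set) so that, globally,
$$\{a,b\} + \alpha\,a + \beta\,b \;\geq\; c_0\,\tau^{-1}e^{-\mu\psi}\langle\psi\rangle^4$$
with $c_0>0$ independent of $\tau,h$. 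Quantizing and applying sharp G\aa rding, then absorbing the $\alpha A,\beta B$ contributions via Cauchy--Schwarz into $\tfrac12\|A v\|^2+\tfrac12\|B v\|^2$, gives
$$\|P_\varphi v\|^2 \;\geq\; \tfrac12\bigl(\|A v\|^2+\|B v\|^2\bigr) + \tfrac{c_0 h}{2\tau}\|v\|^2$$
for $h$ sufficiently small, which is exactly $\tau\|P_\varphi v\|^2 \gtrsim h\|v\|^2 + \tau(\|A v\|^2+\|B v\|^2)$.

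To upgrade from the bare $L^2$ control of $v$ to the full $H^2_h$ norm, I would invoke the classical ellipticity of $A$: its principal symbol $a = |\xi|^2_g - |\partial\varphi|^2_g + V - E$ grows like $|\xi|^2$ as $|\xi|\to\infty$, so the standard semiclassical elliptic estimate gives $\|(hD)^\alpha v\| \leq C(\|A v\| + \|v\|)$ for $|\alpha|\leq 2$ on the annulus. Combining with the $L^2$ bound above produces the $H^2_h$ norm on the right of \eqref{eq:bcar}.

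The main obstacle is the G\aa rding step: one has to build $\alpha,\beta$ with precisely the right $\tau,\mu$-scaling so that the commutator lower bound $c_0\,h\,\tau^{-1}\|v\|^2$ survives the absorption of the cross terms, while simultaneously keeping track of the $\tau$-dependence of every other constant through the symbolic calculus. This bookkeeping is exactly what determines the $\tau^{1/2}$ factor on the left of \eqref{eq:bcar} (as opposed to $\tau$ or $1$), and it is the only nontrivial point -- once the positivity is in hand, the rest is standard semiclassical elliptic regularity.
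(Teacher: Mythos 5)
Your overall strategy (commutator identity plus sharp G{\aa}rding, which is also the paper's) is the right family of argument, but as written it does not prove the stated estimate, and the gap sits exactly at the two places you postpone. First, the closing elliptic step is not uniform in $\tau$: the symbol of $A$ is $a=|\xi|_g^2-|\partial\varphi|_g^2+V-E$ with $|\partial\varphi|_g^2=\tau^2\mu^2e^{2\mu\psi}|\partial\psi|_g^2\sim\tau^2$, so $a$ vanishes on a real characteristic set at semiclassical frequencies $|\xi|\sim\tau$, and the correct elliptic estimate is $\|v\|_{H_h^2}\leqslant C(\|Av\|+\tau^2\|v\|)$, not $C(\|Av\|+\|v\|)$. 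Feeding your $L^2$ bound $\|v\|^2\leqslant C\tau h^{-1}\|P_\varphi v\|^2$ into this yields at best $C\tau^{5/2}\|P_\varphi v\|\geqslant h^{1/2}\|v\|_{H_h^2}$, i.e.\ a loss of $\tau^2$ relative to \eqref{eq:bcar}. Second, the construction of $\alpha,\beta$ with the correct $\tau,\mu$-scaling, which you yourself call the main obstacle, is precisely what is not carried out; and even granting it, extracting only an $L^2$-level gain $c_0h\tau^{-1}\|v\|^2$ from the bracket cannot be repaired afterwards, because the $H_h^2$ information has to be harvested at the G{\aa}rding step itself (moreover, absorbing the $O(h^2)\tau^2\|v\|^2$-type errors from the commutator remainders and from Cauchy--Schwarz against a gain of only $h\tau^{-1}\|v\|^2$ would force $h\tau^3$ to be small, which fails in the regime $\tau\sim h^{-1}$ needed later).

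The paper's proof avoids both problems and is simpler: it writes $\|P_\varphi v\|^2=\|P_\varphi^\ast v\|^2+\langle[P_\varphi^\ast,P_\varphi]v,v\rangle\geqslant Mh\|P_\varphi^\ast v\|^2+\langle[P_\varphi^\ast,P_\varphi]v,v\rangle$ and proves the single global symbol inequality $M|p_\varphi|^2+i\{p_\varphi,\bar p_\varphi\}\geqslant C_6\tau^{-1}\langle\xi\rangle^4$ for $M$ large depending only on $\mu,\psi$. Here $M|p_\varphi|^2$ plays the role of your multipliers $\alpha a+\beta b$: off the characteristic set one has $|p_\varphi|\gtrsim\tau^2$ for $|\xi|\lesssim\tau$ and $|p_\varphi|\gtrsim|\xi|^2$ for $|\xi|\gtrsim\tau$, so $M|p_\varphi|^2$ dominates both $|\{p_\varphi,\bar p_\varphi\}|$ and $\tau^{-1}\langle\xi\rangle^4$ there, while on the characteristic set the bracket itself is $\gtrsim\tau^3\sim\tau^{-1}\langle\xi\rangle^4$. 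Applying sharp G{\aa}rding with the weight $\langle\xi\rangle^4$ then gives $\|P_\varphi v\|^2\geqslant Ch\tau^{-1}\|v\|_{H_h^2}^2-O(h^2)\|v\|_{H_h^2}^2$, which is \eqref{eq:bcar} with the stated $\tau^{1/2}$ and with no separate elliptic-regularity step. If you prefer your $A,B$ formulation, the fix is to establish the lower bound for $\{a,b\}+\alpha a+\beta b$ (or simply $\{a,b\}+M(a^2+b^2)$) with the full weight $\tau^{-1}\bigl(\langle\xi\rangle^4+\tau^4\bigr)$ rather than a constant, and read off the $H_h^2$ norm from the G{\aa}rding inequality itself.
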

\begin{proof}
The proof is based on the standard commutator argument. First,
\begin{equation*}
\begin{split}
\|P_\varphi v\|^2=&\langle P_\varphi v,P_\varphi v\rangle=\langle P_\varphi^\ast P_\varphi v,v\rangle\\
=&\langle P_\varphi^\ast P_\varphi v,v\rangle+\langle[P_\varphi^\ast,P_\varphi]v,v\rangle=\|P_\varphi^\ast v\|^2+\langle[P_\varphi^\ast,P_\varphi]v,v\rangle.
\end{split}
\end{equation*}
For any $M>1$ and $h$ small enough, this implies
\begin{equation*}
\begin{split}
\|P_\varphi v\|^2&\geqslant Mh\|P_\varphi^\ast v\|^2+\langle[P_\varphi^\ast,P_\varphi]v,v\rangle\\
&=h\langle\Op_h(M|p_\varphi|^2+i\{p_\varphi,\bar{p}_\varphi\})u,u\rangle-O(h^2)\|u\|_{H_h^2}
\end{split}
\end{equation*}
From the construction above, we can find $M$ large enough so that
\begin{equation*}
M|p_\varphi|^2+i\{p_\varphi,\bar{p}_\varphi\}\geqslant C_6\tau^{-1}\langle\xi\rangle^4
\end{equation*}
where $C_6$ is a constant only depending on $\mu,\psi$. Now we can use the sharp G{\aa}rding's inequality to conclude the lemma.
\end{proof}

Now we prove a Carleman estimate on different shells.
\begin{prop}[Carleman estimate on shells]\label{pr:carsh}
Let $\varphi$ be as above, we have the following estimate for solutions $u$ to the equation \eqref{eq:loc}
\begin{equation}\label{eq:car}
\begin{split}
\|e^{\varphi/h}u\|_{B(0,\frac{R}{2})\setminus B(0,2r)}\leqslant &\; C_7\tau^{\frac{1}{2}}[R^{-2}e^{\varphi(\frac{R}{2})/h}\|u\|_{B(0,R)\setminus B(0,\frac{R}{2})}\\
&\;\;\;\;\;+r^{-2}e^{\varphi(2r)/h}\|u\|_{B(0,2r)\setminus B(0,r)}].
\end{split}
\end{equation}
where $0<8r<R<r_1$, $C_7>0$ only depends on $\mu,\psi$.
\end{prop}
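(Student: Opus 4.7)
The plan is to reduce Proposition \ref{pr:carsh} to the basic Carleman estimate \eqref{eq:bcar} by applying it to $\chi u$ conjugated by the weight. Choose a radial cutoff $\chi \in C_0^\infty(B(0,R) \setminus B(0,r))$ that equals $1$ on $B(0, R/2) \setminus B(0, 2r)$, constructed so that $|\partial^\alpha \chi| \lesssim R^{-|\alpha|}$ on the outer transition shell $A_{\rm out} := B(0,R) \setminus B(0,R/2)$ and $|\partial^\alpha \chi| \lesssim r^{-|\alpha|}$ on the inner transition shell $A_{\rm in} := B(0,2r) \setminus B(0,r)$. Setting $v := e^{\varphi/h}\chi u$ and using $Pu = 0$ together with $P_\varphi = e^{\varphi/h} P e^{-\varphi/h}$, one has $P_\varphi v = e^{\varphi/h} P(\chi u) = e^{\varphi/h}[P,\chi]u$. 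Applying \eqref{eq:bcar} then yields
\begin{equation*}
h^{1/2}\|e^{\varphi/h}\chi u\|_{H^2_h} \;\leq\; C_5 \tau^{1/2}\|e^{\varphi/h}[P,\chi]u\|.
\end{equation*}

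The left-hand side dominates $h^{1/2}\|e^{\varphi/h} u\|_{L^2(B(0,R/2)\setminus B(0,2r))}$, so it remains to estimate the right-hand side. The commutator $[P,\chi] = -h^2[\Delta_g,\chi]$ is a first-order differential operator whose coefficients are supported in $A_{\rm out} \cup A_{\rm in}$, and since $\varphi = \tau e^{\mu(A-|x|)}$ is radially monotone on the annulus, $e^{\varphi/h}$ is bounded by $e^{\varphi(R/2)/h}$ on $A_{\rm out}$ and (after a harmless reshuffling of the inner transition region, e.g.\ restricting the nontrivial part of $\nabla\chi$ to a thin shell near $|x| = 2r$) by $e^{\varphi(2r)/h}$ on the relevant inner piece. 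Splitting the right-hand side into its two pieces therefore reduces matters to bounding $\|[P,\chi]u\|_{L^2(\text{shell})}$ on each of $A_{\rm out}$ and $A_{\rm in}$ by $\|u\|_{L^2(\text{shell})}$.

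The remaining step is to convert the $h\nabla u$ terms inside $[P,\chi]u$ into $L^2$-norms of $u$ alone. This is the classical Caccioppoli estimate: pairing the equation $(-h^2\Delta_g + V - E)u = 0$ with $\eta^2 \bar u$ for a cutoff $\eta$ adapted to a slightly enlarged transition shell and integrating by parts gives $\|h\nabla u\|_{L^2(\text{shell})} \leq C\|u\|_{L^2(\text{enlarged shell})}$, with the constant picking up a factor of $R^{-1}$ (resp.\ $r^{-1}$) from $|\nabla\eta|$. Combined with the pointwise bounds $|\Delta\chi| + |\nabla\chi|^2 \lesssim R^{-2}$ on $A_{\rm out}$ (resp.\ $r^{-2}$ on $A_{\rm in}$), this produces the prefactors $R^{-2}$ and $r^{-2}$ in \eqref{eq:car} after absorbing harmless powers of $h$ into the constant. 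The main technical obstacle is the careful polynomial bookkeeping in $r, R, h$ in the commutator and Caccioppoli steps; the exponential factors follow directly from the monotonicity of $\varphi$, and the overall argument is the standard Carleman-to-three-spheres template applied to the basic estimate \eqref{eq:bcar}.
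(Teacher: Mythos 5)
Your argument is essentially the paper's own proof: apply the basic Carleman estimate \eqref{eq:bcar} to $v=e^{\varphi/h}\chi u$, identify $P_\varphi v=e^{\varphi/h}[P,\chi]u$, and bound the commutator on the two transition shells using the size of $\nabla\chi$, the monotonicity of $\varphi$, and an elliptic estimate to convert $h\nabla u$ into $u$ --- the paper simply cites standard semiclassical elliptic estimates where you spell out the Caccioppoli step. The one caveat (which the paper itself shares: its proof ends with $e^{\varphi(r)/h}$ on the inner shell, and the later applications use the inner-radius weight) is that on $B(0,2r)\setminus B(0,r)$ the decreasing weight is only bounded by $e^{\varphi(r)/h}$, not $e^{\varphi(2r)/h}$, and your ``reshuffling'' of the transition region cannot fully recover $\varphi(2r)$; what both you and the paper actually prove is the version with the inner-radius weight, which suffices for the applications.
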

\begin{proof}
We shall take $v=e^{\varphi/h}\chi u$ in \eqref{eq:bcar} where $\chi$ is a function supported in $C_0^\infty(B(0,R)\setminus B(0,r))$, such that $\chi\equiv1$ on $B(0,\frac{R}{2})\setminus B(0,2r)$, and
\begin{equation*}
|\nabla\chi|\leqslant Cr^{-1}, |\nabla^2\chi|\leqslant Cr^{-2}.
\end{equation*}
Then
\begin{equation*}
\|v\|=\|e^{\varphi/h}\chi u\|\geqslant \|e^{\varphi/h}u\|_{B(0,\frac{R}{2})\setminus B(0,2r)},
\end{equation*}
and
\begin{equation*}
\|P_\varphi v\|=\|e^{\varphi/h}P(\chi u)\|=\|e^{\varphi/h}[P,\chi]u\|
\end{equation*}
where $[P,\chi]=\langle a,hD\rangle+b$ is a first order differential operator supported on $\supp\nabla\chi\subset(B(0,R)\setminus B(0,\frac{R}{2}))\cup(B(0,2r)\setminus B(0,r))$ with coefficients $a=O(hr^{-1}), b=O(h^2r^{-2})$ on $B(0,2r)\setminus B(0,r)$ and with $r$ replaced by $R$ on $B(0,R)\setminus B(0,\frac{R}{2})$. Therefore by standard elliptic estimates (e.g. \cite[Chapter 7]{Z}), we have
\begin{equation*}
\begin{split}
\|P_\varphi v\|\leqslant&\; C_8[e^{\varphi(\frac{R}{2})/h}\|u\|_{H_h^1(B(0,R)\setminus B(0,\frac{R}{2}))}+e^{\varphi(2r)/h}\|u\|_{H_h^1(B(0,2r)\setminus B(0,r))}]\\
\leqslant&\; C_9[R^{-2}h^2e^{\varphi(\frac{R}{2})/h}\|u\|_{B(0,R)\setminus B(0,\frac{R}{2})}+r^{-2}h^2e^{\varphi(r)/h}\|u\|_{B(0,2r)\setminus B(0,r)}].
\end{split}
\end{equation*}
which finishes the proof.
\end{proof}

\subsection{Proof of Theorem \ref{pr:dp}}
\label{sec:pr}
We shall use \ref{pr:carsh} to prove Theorem \ref{pr:dp}. The tunneling estimate follows from the standard overlapping chains of
balls argument introduced by Donnelly and Fefferman \cite{D-F} while the doubling property is a corollary of the tunneling and the Carleman estimates on shells.
\begin{proof}[Proof of \ref{eq:tun}]
Without loss of generality, we can assume $\|u\|_M=1$ and we only need to prove \ref{eq:tun} for $r<\frac{1}{100}r_1$. We shall fix $\tau$ large in the expression of $\varphi$ and replace $r$ by $\frac{r}{4}$ and take $R=32r$ in Proposition \ref{pr:carsh}. Then we get
\begin{equation}\label{eq:car1}
e^{\varphi(2r)/h}\|u\|_{B(x,2r)\setminus B(x,r)}
\leqslant C_{10}[e^{\varphi(4r)/h}\|u\|_{B(x,8r)\setminus B(x,4r)}+e^{\varphi(\frac{r}{4})/h}\|u\|_{B(0,\frac{r}{2})\setminus B(0,\frac{r}{4})}]
\end{equation}
for any $x\in M$. It is obvious that there exists a point $q\in M$ such that
$$\|u\|_{B(q,r)}\geqslant cr^{-n}\geqslant e^{-A_0/h}.$$
For any $p\in M$, we can find a sequence
$x_0=q, x_1,\ldots, x_m=p$ such that $d(x_j,x_{j+1})<r$ and $m\leqslant C_{11}r^{-1}$. We shall prove by induction that there exists $A_j>0, h_j>0$ only depends on $r$ such that for $0<h<h_j$
\begin{equation}\label{eq:ind}
\|u\|_{B(x_j,r)}\geqslant e^{-A_j/h}.
\end{equation}
We already know this is true for $j=0$. Suppose this is true for $j$, then for $j+1$, since $B(x_j,r)\subset B(x_{j+1},2r)$, either
$$\|u\|_{B(x_{j+1},r)}\geqslant e^{-(A_j+1)/h}$$
or
$$\|u\|_{B(x_{j+1},2r)\setminus B(x_{j+1},r)}\geqslant e^{-(A_j+1)/h}.$$
For the first case, there is nothing to prove, for the later, we let $x=x_{j+1}$ in \eqref{eq:car1} to get
\begin{equation}
\|u\|_{B(0,\frac{r}{2})\setminus B(0,\frac{r}{4})}\geqslant e^{-\varphi(\frac{r}{4})/h}[C_{12}^{-1}h^{-\frac{3}{2}}e^{\varphi(2r)/h}e^{-(A_j+1)/h}-e^{\varphi(4r)/h}].
\end{equation}
We only need to choose $\mu$ and $A$ large enough in the expression of $\varphi$ so that $\varphi(2r)-A_j-1>\varphi(4r)$ to get \eqref{eq:ind}. Now since $m$ is bounded, we get the desired tunneling estimates \ref{eq:tun}.
\end{proof}
\begin{proof}[Proof of \eqref{eq:dp}]
Again, we only need to prove \eqref{eq:dp} for $r\in(c_1h,\frac{r_1}{100})$. Now we shall fix $R=\frac{r_1}{2}$ in Proposition \ref{pr:carsh} and replace $r$ by $\frac{r}{2}$, then for any $p\in M$,
\begin{equation}\label{eq:car2}
\begin{split}
&e^{\varphi(\frac{R}{4})/h}\|u\|_{B(p,\frac{R}{4})\setminus B(p,\frac{R}{8})}+e^{\varphi(2r)/h}\|u\|_{B(p,2r)\setminus B(p,r)}\\
\leqslant&\;C_{13}\tau^{\frac{1}{2}}[e^{\varphi(\frac{R}{2})/h}\|u\|_{B(p,R)\setminus B(p,\frac{R}{2})}+h^{-2}e^{\varphi(\frac{r}{2})/h}\|u\|_{B(p,r)\setminus B(p,\frac{r}{2})}].
\end{split}
\end{equation}
By the tunneling estimates \eqref{eq:tun} and the fact that there exists a ball of radius $\frac{R}{8}$ inside $B(p,\frac{R}{4})\setminus B(p,\frac{R}{8})$,
\begin{equation*}
\|u\|_{B(p,\frac{R}{4})\setminus B(p,\frac{R}{8})}\geqslant e^{-C_{14}/h}\|u\|\geqslant e^{-C_{14}/h}\|u\|_{B(p,R)\setminus B(p,\frac{R}{2})}
\end{equation*}
By choosing $\mu$ and $A$ large enough only depending on $R$ and $\tau=C_{15}r^{-1}\leqslant C_{16}h^{-1}$, we can make $\varphi(\frac{R}{4})-C_{14}\geqslant\varphi(\frac{R}{2})$. Therefore for $h<h_0$,
\begin{equation*}
e^{\varphi(\frac{R}{4})/h}\|u\|_{B(p,\frac{R}{4})\setminus B(p,\frac{R}{8})}\geqslant C_{13}\tau^{\frac{1}{2}}e^{\varphi(\frac{R}{2})/h}\|u\|_{B(p,R)\setminus B(p,\frac{R}{2})}.
\end{equation*}
Therefore from \label{eq:car2} we see that
\begin{equation*}
C_{13}\tau^{\frac{1}{2}}h^{-2}e^{\varphi(\frac{r}{2})/h}\|u\|_{B(p,r)\setminus B(p,\frac{r}{2})}\geqslant e^{\varphi(2r)/h}\|u\|_{B(p,2r)\setminus B(p,r)}.
\end{equation*}
Since
$$\varphi(\frac{r}{2})-\varphi(2r)=\tau e^{\mu(A-2r)}(e^{3\mu r/2}-1)\geqslant \frac{3}{2}\mu\tau r\geqslant C_{17},$$
and $\tau\leqslant C_{16}h^{-1}$, we can get that for $h<h_1$,
\begin{equation*}
\|u\|_{B(p,r)\setminus B(p,\frac{r}{2})}\geqslant e^{-C_{18}/h}\|u\|_{B(p,2r)\setminus B(p,r)}.
\end{equation*}
Now \eqref{eq:dp} is a simple consequence of this estimate.
\end{proof}

\section{Nodal Sets for Solutions to Semiclassical Schr\"{o}dinger equations}
\label{sec:ap}
In this section, we assume $(M,g)$ is a real analytic compact Riemannian manifold, $V$ a real analytic function on $M$. Let $u=u(h)$ be the solution to the semiclassical Schr\"{o}dinger equation $(-h^2\Delta_g+V(x)-E(h))u=0$. We study the vanishing properties of $u$.

\subsection{Order of vanishing}
\label{sec:ov}
\begin{thm}
There exists a constant $C>0$ such that if $u$ vanishes at $x_0$ to the order $k$, then $k\leqslant Ch^{-1}$.
\end{thm}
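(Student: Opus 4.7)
The plan is to combine the holomorphic extension guaranteed by Proposition \ref{pr:equ} with the tunneling estimate \eqref{eq:tun} from Theorem \ref{pr:dp}. An upper bound on $\|u\|_{L^2(B(x_0,r))}$ that decays like $r^k$ follows from a multidimensional Schwarz-type lemma applied to the analytic continuation; the tunneling estimate provides a matching lower bound independent of $k$. Forcing consistency of the two bounds forces $k \leq Ch^{-1}$.

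More precisely, I would first observe that since $g$ and $V$ are real analytic, the theorem of Section \ref{sec:ag} applies, so the semiclassical Cauchy estimates \eqref{eq:cauchy} hold on a neighborhood of $x_0$, and Proposition \ref{pr:equ} then gives a fixed complex neighborhood $W = \{|z-x_0| < \delta_0\}$ and a constant $C$ such that $u(h)$ extends holomorphically to $W$ with
\[
\sup_{W} |u(h)| \;\leq\; C\, e^{C/h}\, \|u(h)\|_{L^\infty}.
\]
Combined with $\|u\|_{L^\infty} \leq C h^{-n/2}\|u\|_{L^2(M)}$, we may absorb the polynomial in $h$ into the exponential and write $\sup_W|u| \leq C' e^{C'/h}\|u\|_{L^2(M)}$.

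Next, the hypothesis that $u$ vanishes at $x_0$ to order $k$ means $\partial^\alpha u(x_0) = 0$ for $|\alpha| < k$. The analytic continuation has the same Taylor series at $x_0$, hence vanishes to the same order as a holomorphic function. Applying the one-variable Schwarz lemma along each complex line through $x_0$ (i.e. considering $\zeta \mapsto u(x_0 + \zeta w)$ with $|w| = 1$, which vanishes to order $k$ at $\zeta = 0$ and is bounded by $C'e^{C'/h}\|u\|_{L^2(M)}$ on $|\zeta| = \delta_0$) yields, for every $r \leq \delta_0/2$,
\[
\sup_{|z - x_0| \leq r} |u(z)| \;\leq\; \left(\frac{r}{\delta_0}\right)^k C' e^{C'/h}\|u\|_{L^2(M)}.
\]
Integrating the squared bound over the real ball $B(x_0,r)$ gives
\[
\|u\|_{L^2(B(x_0,r))} \;\leq\; C''\, r^{n/2}\, \left(\frac{r}{\delta_0}\right)^{k} e^{C'/h}\|u\|_{L^2(M)}.
\]

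Finally, I would fix $r = \delta_0/2$ (a constant independent of $h$ and $k$) and apply the tunneling estimate \eqref{eq:tun} of Theorem \ref{pr:dp}, which gives
\[
\|u\|_{L^2(B(x_0,\delta_0/2))} \;\geq\; e^{-c/h}\, \|u\|_{L^2(M)}
\]
with $c = c(\delta_0/2)$ independent of $h$. Chaining the two bounds,
\[
e^{-c/h} \;\leq\; C''\, (\delta_0/2)^{n/2}\, 2^{-k}\, e^{C'/h},
\]
which upon taking logarithms yields $k \log 2 \leq (c + C')/h + O(1)$, i.e.\ $k \leq C h^{-1}$ for all sufficiently small $h$, as required. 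The main conceptual step is the transfer from the semiclassical exponential growth bound on the holomorphic extension to the Schwarz-type decay of order $(r/\delta_0)^k$; everything else is bookkeeping.
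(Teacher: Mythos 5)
Your proposal is correct, and it has the same two-step skeleton as the paper's proof: derive quantitative smallness of $u$ near $x_0$ from the order-$k$ vanishing together with the analytic (semiclassical) estimates, then play that against the tunneling lower bound \eqref{eq:tun} on a fixed ball. The difference is in how the smallness is extracted. The paper stays entirely on the real side: it writes the Taylor expansion of $u$ at $x_0$ to order $k$ and feeds the semiclassical Cauchy estimates \eqref{eq:cauchy} into the remainder, getting $|u(x)|\leqslant (C\epsilon)^k(1+\tfrac{1}{hk})^k$ on $|x-x_0|<\epsilon$; it then splits into the trivial case $k<h^{-1}$ and the case $k\geqslant h^{-1}$, where choosing $\epsilon$ small (independent of $h$) gives $|u|\leqslant e^{-k}$ and the Carleman/tunneling bound $\|u\|_{B(x_0,\epsilon)}\geqslant e^{-C/h}$ forces $k\leqslant Ch^{-1}$. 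You instead pass to the equivalent statement (ii) of Proposition \ref{pr:equ} --- the holomorphic continuation to a fixed complex ball with bound $e^{C/h}$ --- and apply the one-variable Schwarz lemma along complex lines through $x_0$, which yields the clean factor $(r/\delta_0)^k$ with no case distinction and no tuning of $\epsilon$ against $k$; the price is that you invoke the holomorphic extension explicitly (harmless here, since the paper proves (ii) $\Leftrightarrow$ (iii)) and you should phrase the Schwarz bound via radii $\rho<\delta_0$ and let $\rho\to\delta_0$, since the extension need not be bounded up to the boundary of $W$; also $\delta_0$ and the constants should be taken uniform in $x_0$ by compactness of $M$, exactly as in the paper. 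With those cosmetic points addressed, your argument is a valid and slightly cleaner variant of the same proof.
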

\begin{proof}
Without loss of generality, we can assume $\|u(h)\|=1$. By Taylor's formula, for $|x-x_0|<\epsilon$,
\begin{equation}
|u(h)(x)|\leqslant\frac{\epsilon^k}{k!}\sup_{|\alpha|=k}\sup_{|y-x_0|<2\epsilon}|D^\alpha u(y)|
\end{equation}
Now we can apply semiclassical Cauchy estimates to get
\begin{equation}
|u(h)(x)|\leqslant(C\epsilon)^k(1+\frac{1}{hk})^k
\end{equation}
where $C$ is a constant only depending on $(M,g)$ and $V$ as long as $\epsilon$ is small enough.
If $k<\frac{1}{h}$, then there is nothing to prove. Otherwise, we can take $\epsilon$ small (not depend on $h$) to get
$$|u(h)(x)|\leqslant e^{-k}, |x-x_0|<\epsilon.$$
On the other hand, by Carleman estimate
\begin{equation}
\|u\|_{B(x_0,\epsilon)}\geqslant e^{-\frac{C}{h}}.
\end{equation}
Again we have $k\leqslant Ch^{-1}$.
\end{proof}

\begin{rem}
In \cite{B-C}, it is proved that this is true even when $(M,g)$ and $V$ are only smooth and the constant $C$ only depends on $(M,g)$ and the $C^1$-norm of $V$.
\end{rem}

\subsection{Nodal Set-Upper bounds}
\label{sec:nsu}
\begin{thm}
The $n-1$-dimension Hausdorff measure of the nodal set $N=\{x\in M,u(x)=0\}$ satisfies $\mathcal{H}^{n-1}(N)\leqslant Ch^{-1}$.
\end{thm}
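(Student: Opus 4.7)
I would follow the classical strategy of Donnelly--Fefferman \cite{D-F}, combining the two ingredients established earlier in this paper: the semiclassical Cauchy estimates of Section \ref{sec:sc} (to get analytic continuation into a uniform complex neighborhood with growth at most $e^{C/h}$) and the tunneling lower bound \eqref{eq:tun} (to guarantee that $u$ is not uniformly small on any fixed ball). The plan is local-to-global: fix a finite cover of $M$ by coordinate balls $B_i=B(p_i,\rho)$ with $\rho$ small; for each $i$ bound $\mathcal{H}^{n-1}(N\cap B_i)$ by $Ch^{-1}$; sum.

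Normalize so $\|u(h)\|_{L^\infty(M)}=1$. By Proposition \ref{pr:equ}, $u$ extends from $B(p_i,2\rho)$ to a holomorphic function $\tilde u$ on a complex neighborhood $W_i\subset\mathbb{C}^n$ of $B(p_i,\rho)$ of fixed size, with $\sup_{W_i}|\tilde u|\leqslant e^{C/h}$. On the other hand, the tunneling estimate \eqref{eq:tun} gives $\|u\|_{L^2(B(p_i,\rho/2))}\geqslant e^{-c/h}$, and combined with the standard elliptic $L^\infty$-vs-$L^2$ bound we obtain some real point $q_i\in B(p_i,\rho/2)$ with $|\tilde u(q_i)|\geqslant e^{-C/h}$. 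So in each chart $\tilde u$ enjoys an $e^{C/h}$ bound on a fixed complex neighborhood together with a comparable lower bound somewhere real.

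To pass from this analytic information to the measure of $N$, I would use the following one-variable reduction. In Euclidean coordinates on $B_i$, foliate by a family of parallel real line segments in a fixed direction $\omega$; for a generic line $\ell$ the restriction $\tilde u|_\ell$ is holomorphic in a disk $D_\ell\subset\mathbb{C}$ of radius $r_0$ independent of $h$, with $\sup_{D_\ell}|\tilde u|_\ell|\leqslant e^{C/h}$. For any $z_0\in\ell$ with $\tilde u(z_0)\neq 0$, Jensen's formula gives
\begin{equation*}
\#\{z\in\ell\cap B_i:\tilde u(z)=0\}\leqslant\frac{\log\sup_{D_\ell}|\tilde u|-\log|\tilde u(z_0)|}{\log(r_0/r)}\leqslant C\bigl(h^{-1}-\log|\tilde u(z_0)|\bigr).
\end{equation*}
Integrating in the $(n-1)$ transverse variables and applying the co-area (or Cauchy--Crofton) formula with enough directions $\omega$ to detect $\mathcal{H}^{n-1}$,
\begin{equation*}
\mathcal{H}^{n-1}(N\cap B_i)\leqslant Ch^{-1}+C\int_{B_i}\bigl|\log|\tilde u(x)|\bigr|\,dx.
\end{equation*}

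The main obstacle is controlling the last integral uniformly in $h$, since a priori $\tilde u$ could be extremely small on large sets. The point here is that $\log|\tilde u|$ is plurisubharmonic on $W_i$ with upper bound $C/h$, and by the two-sided estimate $\sup_{B_i}|\tilde u|\geqslant e^{-C/h}$ a Cartan/Jensen-type averaging argument (or equivalently an $h$-dependent Harnack/Bernstein inequality) yields $\int_{B_i}|\log|\tilde u||\,dx\leqslant C/h$. Inserting this bound gives $\mathcal{H}^{n-1}(N\cap B_i)\leqslant Ch^{-1}$, and summing over the finite cover of $M$ completes the proof. The doubling property \eqref{eq:dp} can be used in place of tunneling if one prefers a ``pointwise'' version of the lower bound propagated from a maximum point of $|u|$.
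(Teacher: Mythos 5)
Your overall architecture is the same as the paper's: normalize, use the semiclassical Cauchy estimates (Proposition \ref{pr:equ}) to continue $u$ holomorphically to a fixed-size complex neighborhood with bound $e^{C/h}$, use tunneling \eqref{eq:tun} (the paper uses the doubling property \eqref{eq:dp} in the same role) to get a comparable lower bound at a real point, and sum over a finite $h$-independent cover. Where you genuinely diverge is in the step from this two-sided analytic information to the measure of $N$: the paper simply quotes the technical lemma of \cite{D-F} (a holomorphic $H$ on $B_{\mathbb{C}^n}(0,2)$ whose sup on a small real ball is at least $e^{-\alpha}$ times its complex max has real zero set of $\mathcal{H}^{n-1}$-measure $\leqslant C\alpha$), applied with $\alpha=C/h$ after rescaling, whereas you reprove that lemma inline via Jensen's formula on complexified lines, the integral-geometric (Crofton) formula, and a mean-value bound for the plurisubharmonic function $\log|\tilde u|$. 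This is legitimate and is essentially how the cited lemma is proved in \cite{D-F}; what the paper's route buys is brevity, while yours makes the mechanism explicit. Two points in your sketch need care but are not fatal. First, Crofton gives the integral-geometric measure, which dominates $\mathcal{H}^{n-1}$ only for rectifiable sets; you should note that $N$ is countably $(n-1)$-rectifiable because $u$ is real-analytic and $u\not\equiv 0$ (e.g.\ by the tunneling estimate). Second, your bound $\int_{B_i}\bigl|\log|\tilde u|\bigr|\,dx\leqslant C/h$ via the sub-mean-value inequality requires a complex ball centered at (or near) the point $q_i$ where $|\tilde u(q_i)|\geqslant e^{-C/h}$, contained in the domain of holomorphy $W_i$ and containing the real region of integration; since $W_i$ has fixed, $h$-independent size, this is arranged by taking the real balls $B_i$ of radius small compared with that size and applying \eqref{eq:tun} at that scale (the constant $c(r)$ depends only on $r$), or by localizing to cubes of comparable size and propagating the lower bound with \eqref{eq:dp}. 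Also, since \eqref{eq:tun} is stated with the global $L^2$-norm, converting between your $L^\infty$ normalization and $L^2$ costs only a factor $h^{-N}$, which is absorbed into $e^{C/h}$. With these points made explicit your argument is complete and yields $\mathcal{H}^{n-1}(N\cap B_i)\leqslant Ch^{-1}$ on each chart, hence the theorem.
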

\begin{proof}
From above, we know, for each $x\in M$, there exists $r=r(x)$ (independent of $h$) such that in local geodesic coordinates, $u$ can be analytic continued from $B(x,r)$ to $B_{\mathbb{C}^n}(x,\frac{r}{2})$ with $\sup\limits_{B_{\mathbb{C}^n}(x,\frac{r}{2})}|u|\leqslant e^{C/h}\sup\limits_{B(x,r)}|u|$.
We also recall the following technical lemma from \cite{D-F}.
\begin{lem}
There exists some constant $C>0$ such that if $H(z)$ is a holomorphic function in $|z|\leqslant2,z\in\mathbb{C}^n$ and for some $\alpha>1$
$$\sup_{B(x,\frac{1}{5})}|H|\geqslant e^{-\alpha}\max_{B_{\mathbb{C}^n}(0,2)}|H(z)|, x\in\mathbb{R}^n,|x|<\frac{1}{10}$$
then $H^{n-1}(\{x\in\mathbb{R}^n,|x|<\frac{1}{20},H(x)=0\})\leqslant C\alpha$.
\end{lem}
Now from this and the doubling property \eqref{eq:dp}, it is easy to see
\begin{equation*}
H^{n-1}(\{y\in B(x,\frac{r}{40}):u(y)=0\})\leqslant Ch^{-1}
\end{equation*}
By compactness, we can cover $M$ by finitely many $B(x,\frac{r}{40})$ and conclude the proof.
\end{proof}

\subsection{Nodal Set-Lower bounds in the classical allowed region}
\label{sec:nsl}

\begin{thm}
The $n-1$-dimensional Hausdorff measure of the nodal set $\mathcal{H}^{n-1}(N)\geqslant ch^{-1}$. More precisely, $\mathcal{H}^{n-1}(N_a)\geqslant ch^{-1}$, where $N_a=\{x\in M,u(x)=0,V(x)<E\}$ is the nodal set in the classical allowed region.
\end{thm}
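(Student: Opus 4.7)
The plan is to follow the Donnelly--Fefferman covering argument, rescaled to the semiclassical length scale $h$. In the classically allowed region $u$ must oscillate on every ball of radius $\sim h$: no such ball can have $u$ of constant sign, and each sign-changing ball contributes $\gtrsim h^{n-1}$ to the nodal measure. Since $\sim h^{-n}$ disjoint $h$-balls tile a fixed-volume subset of $\{V<E\}$, the final bound is $\mathcal{H}^{n-1}(N_a) \gtrsim h^{-n}\cdot h^{n-1} = h^{-1}$.

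Fix $\epsilon > 0$ small enough that $\Omega := \{V < E_0 - \epsilon\}$ has positive Lebesgue measure, and take $h$ so small that $E(h) - V \geq \epsilon/2$ on $\Omega$. The central local step is to exhibit $\rho_0 = \rho_0(\epsilon, g)$ such that $u$ changes sign on every ball $B(x_0, \rho_0 h) \subset \Omega$. Suppose for contradiction that $u$ has constant sign on $\bar B$, with $B := B(x_0, \rho_0 h)$ and, say, $u \geq 0$. Let $\phi > 0$ be the first Dirichlet eigenfunction of $-\Delta_g$ on $B$, with eigenvalue $\mu_1(B) \asymp (\rho_0 h)^{-2}$. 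Green's identity for the pair $(\phi, u)$, combined with $-h^2\Delta_g u = (E-V)u$, gives
\begin{equation*}
\tfrac{\epsilon}{2}\int_B u\phi\,dV_g \leq \int_B (E-V)u\phi\,dV_g = \int_B \phi(-h^2 \Delta_g u)\,dV_g \leq h^2 \mu_1(B)\int_B u\phi\,dV_g,
\end{equation*}
where the last inequality uses the nonpositive boundary term $h^2\int_{\partial B} u\,\partial_\nu\phi\,dS \leq 0$ (since $u \geq 0$ on $\bar B$ and $\partial_\nu\phi \leq 0$ on $\partial B$). Because $u\not\equiv 0$ on $B$ (by unique continuation), $\int u\phi > 0$, and the chain forces $\rho_0^2 \lesssim 1/\epsilon$, contradicting any choice of $\rho_0$ larger than this threshold.

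For each sign-changing ball $B = B(x_0, \rho_0 h) \subset \Omega$, rescale $v(y) := u(x_0 + \rho_0 h\, y)$ so that $v$ solves a uniformly elliptic equation $-\Delta v = W v$ on $B(0,1)$ with $|W| \leq C\rho_0^2$ and itself changes sign. A standard local nodal lower bound --- applying Faber--Krahn to a connected component of $\{v>0\}$ to force volume bounded below, and then the relative isoperimetric inequality in $B(0,1)$ --- yields $\mathcal{H}^{n-1}(\{v=0\} \cap B(0,1/2)) \geq c > 0$; unscaling gives $\mathcal{H}^{n-1}(N \cap B(x_0, \rho_0 h/2)) \gtrsim h^{n-1}$. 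Covering a compact subset of $\Omega$ by a maximal disjoint family of such balls (of which there are $\gtrsim h^{-n}$) and summing over them produces the desired $\mathcal{H}^{n-1}(N_a) \gtrsim h^{-1}$.

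The main technical obstacle is the local nodal lower bound in the rescaled picture: a priori the sign components of $v$ could be geometrically very thin, and the Faber--Krahn route is cleanest only when a nodal component of $v$ sits inside $B(0,1)$, requiring extra care when a component crosses $\partial B(0,1)$. In the present analytic setting this difficulty can be bypassed by using Proposition \ref{pr:equ} to extend $v$ holomorphically to a complex ball of fixed radius and then applying a Jensen-type valence argument on $2$-planes through the origin, as in \cite{D-F}.
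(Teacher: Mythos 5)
Your outer skeleton is the same as the paper's (a zero/sign change in every ball of radius $\sim h$ inside the allowed region, a per-ball nodal bound $\gtrsim h^{n-1}$, then summation over $\gtrsim h^{-n}$ disjoint balls), and your first step is fine: the Green's identity argument with the first Dirichlet eigenfunction of $-\Delta_g$ on $B(x_0,\rho_0 h)$ is a legitimate variant of the paper's maximum-principle comparison with the ground state of $-h^2\Delta_g+V$, and yields the same conclusion with constants depending only on $\epsilon$ and $g$.

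The genuine gap is the central local step, and you have in effect conceded it without repairing it. Sign change in $B(x_0,\rho_0 h)$ alone does not give $\mathcal{H}^{n-1}(N\cap B(x_0,\rho_0 h/2))\gtrsim h^{n-1}$: after rescaling, a connected component of $\{v>0\}$ need not be compactly contained in $B(0,1)$, and a component that crosses $\partial B(0,1)$ admits no Faber--Krahn bound (the restriction of $v$ is not an admissible Dirichlet test function), so one sign set may occupy arbitrarily small volume in the ball and the relative isoperimetric inequality then gives nothing. Your proposed bypass --- holomorphic extension via Proposition \ref{pr:equ} plus a Jensen-type valence count on $2$-planes --- cannot close this: valence/Jensen arguments bound the number of zeros from \emph{above} (that is exactly the mechanism of the upper bound in Section \ref{sec:nsu}) and give no lower bound on the nodal measure or on the volumes of the sign components. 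What is actually needed, and what the paper supplies, is quantitative control of $\sup_{B}|u|$ against its average at scale $h$: (a) the mean-value-type inequality at a nodal point, $\bigl|\int_{B(p,ch)}u\bigr|\leqslant c_0\int_{B(p,ch)}|u|$, proved by a Gronwall argument on spherical averages in the style of Colding--Minicozzi, which forces $\min\{\int u^+,\int u^-\}\geqslant c_1\int|u|$; and (b) the Donnelly--Fefferman ``good cube'' estimate, $\Av_{B_\nu}u^2\geqslant c\,\Av_{Q_\nu}u^2$ for at least half of the cubes of side $\sim h$, which rests on the doubling property \eqref{eq:dp} (itself a Carleman-estimate input, not a consequence of analyticity alone). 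Combining (a), (b) and elliptic $L^\infty$--$L^2$ bounds yields $\min\{|B_\nu^+|,|B_\nu^-|\}\geqslant c|B_\nu|$, and only then does the (relative) isoperimetric inequality produce $\mathcal{H}^{n-1}(N\cap B_\nu)\gtrsim h^{n-1}$ on those good cubes --- note the count of usable balls drops to ``at least half,'' which still suffices. Neither ingredient (a) nor (b) appears in your proposal, so the per-ball lower bound, and with it the theorem, is not established as written.
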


First we prove a lemma
\begin{lem}
There exists $C>0$ such that for every small enough $h>0$, $u(h)$ has a zero in every ball $B(x_0,Ch)$ contained in the classical allowed region $\{x:V(x)<E\}$.
\end{lem}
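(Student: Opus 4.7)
The plan is to argue by contradiction through a Dirichlet eigenvalue comparison on the small ball $B=B(x_0,Ch)$. Suppose $u$ does not vanish in $B$; replacing $u$ by $-u$ if necessary, we may assume $u>0$ on $B$. Since $u$ is a positive solution of $(-h^2\Delta_g+V-E)u=0$ on $B$, the Allegretto--Piepenbrink (Barta) principle --- which follows from Picone's identity applied with test functions of the form $\phi^2/u$ for $\phi\in C_0^\infty(B)$, the boundary terms vanishing because $\phi$ is compactly supported --- implies that the quadratic form
$$Q(\phi)=\int_B\bigl(h^2|\nabla\phi|_g^2+(V-E)\phi^2\bigr)\,dV_g,\qquad \phi\in C_0^\infty(B),$$
is nonnegative. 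Equivalently, the first Dirichlet eigenvalue $\mu_1(B)$ of $-h^2\Delta_g+V-E$ on $B$ satisfies $\mu_1(B)\geq 0$.

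Next, I would produce a test function that violates this inequality. Fix $\phi_0\in C_0^\infty(B(0,1))$ (optimally, the first Dirichlet eigenfunction of $-\Delta$ on the Euclidean unit ball) and set $\phi(x)=\phi_0(y/(Ch))$ in geodesic normal coordinates $y$ centered at $x_0$. A direct scaling computation, together with the fact that in normal coordinates the metric is Euclidean to leading order as $Ch\to 0$, yields
$$\frac{Q(\phi)}{\int_B\phi^2\,dV_g}\leq \frac{\kappa_n}{C^2}+\max_B(V-E)+o(1)=\frac{\kappa_n}{C^2}-(E-V(x_0))+O(Ch)$$
as $h\to 0$, where $\kappa_n=j_{n/2-1,1}^2$ (or any upper bound for $\int|\nabla\phi_0|^2/\int\phi_0^2$) is a dimensional constant.

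Choosing $C$ so large that $\kappa_n/C^2<E-V(x_0)$, for example $C^2>2\kappa_n/(E-V(x_0))$, makes the right-hand side strictly negative for all sufficiently small $h$, contradicting $\mu_1(B)\geq 0$. This establishes the lemma, with the caveat that $C$ depends on a lower bound for $E-V(x_0)$: the constant is uniform on any compact subset of the classical allowed region, and scales like $(E-V(x_0))^{-1/2}$ if $x_0$ is allowed to approach the turning surface $\{V=E\}$.

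The only nontrivial ingredient is the Allegretto--Piepenbrink (Barta) principle for the Schr\"odinger-type operator on $(M,g)$; this is standard but is the one place where some care is needed, since it requires checking that Picone's identity goes through with the Laplace--Beltrami operator (it does, and the computation is identical to the Euclidean one after working in local coordinates). The remainder of the argument is routine Rayleigh quotient estimation combined with the small-ball eigenvalue scaling $\lambda_1(B_g(x_0,r))\sim \kappa_n/r^2$ as $r\to 0$.
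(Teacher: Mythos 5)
Your argument is correct and follows essentially the same route as the paper: both reduce the lemma to the incompatibility of a nowhere-vanishing (hence signed) solution on $B(x_0,Ch)$ with the first Dirichlet eigenvalue of $-h^2\Delta_g+V$ on that ball lying strictly below $E(h)$, the eigenvalue bound coming from the same small-ball scaling $\lambda_1\leq V(x_0)+\kappa_n C^{-2}+o(1)$. The only real difference is in how the incompatibility is proved --- you invoke the Allegretto--Piepenbrink/Barta principle via Picone's identity, while the paper derives it directly by applying the maximum principle to $w=u_0/u$ with $u_0$ the first Dirichlet eigenfunction --- and your caveat that $C$ degenerates as $x_0$ approaches $\{V=E\}$ is shared (implicitly) by the paper's proof and is harmless, since the lemma is only applied on compact subsets of the classically allowed region.
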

\begin{proof}
We shall work in the normal geodesic coordinate centered at $x_0$. First, by comparison theorem, the first Dirichlet eigenvalue of $-h^2\Delta_g$ on $B(0,c_1h)$ is at most $c_2h$. Therefore the first Dirichlet eigenvalue of $-h^2\Delta_g+V(x)$ on $B(0,c_1h)$ is at most $V(0)+c_3h$. Let $u_0$ be the corresponding eigenfunction, we know $u_0>0$ on $B(0,c_1h)$. Now suppose $u>0$ on $B(0,c_1h)$, we set $w=u_0/u$, then $w=0$ on $\partial B(0,c_1h)$ and $w>0$ in $B(0,c_1h)$. Therefore $w$ achieves the maximum at some $x_1\in B(0,c_1h)$. We have at the point $x_1$,
$$0=\partial_iw=\frac{u\partial_iu_0-u_0\partial_iu}{u^2}$$
and
\begin{equation*}
\begin{split}
0\leqslant&\;-h^2\Delta_gw=\frac{(-h^2\Delta_gu_0)u-(-h^2\Delta_gu)u_0}{u^2}\\
=&\;\frac{(-h^2\Delta_g+V)u_0u-(-h^2\Delta_g+V)uu_0}{u^2}\leqslant\frac{(V(0)+c_3h-E(h))uu_0}{u^2}<0
\end{split}
\end{equation*}
if $h$ is small enough, since $V(0)<E$. This contradiction shows that $u$ must have a zero in the ball $B(x_0,Ch)$.
\end{proof}

The second lemma is a generalization of the mean-value formula for Euclidean Laplacian.
\begin{lem}
There exists $c>0$ and $c_0\in(0,1)$, such that for every $h\in(0,h_0)$, $p\in M$ such that $u(p)=0$,
$$\left|\int_{B(p,ch)}u\right|\leqslant c_0\int_{B(p,ch)}|u|$$
As a corollary, we have for some $c_1>0$,
$$\min\left\{\int_{B(p,ch)}u^+,\int_{B(p,ch)}u^-\right\}\geqslant c_1\int_{B(p,ch)}|u|$$
\end{lem}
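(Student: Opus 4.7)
The plan is to rescale by $h$ so that the semiclassical Schr\"odinger equation becomes a uniformly elliptic equation with $O(1)$ coefficients on a large ball, and then exploit the fact that any exact constant-coefficient Helmholtz-type solution that vanishes at the origin has zero spherical mean and hence zero integral over centered balls. Concretely, in normal geodesic coordinates around $p$ I set $v(y)=u(p+hy)$; then $v(0)=0$ and $(-\Delta_{g_h}+W_h)v=0$ on $B(0,R_h)$ with $R_h\sim h^{-1}$, where $g_h$ converges to the Euclidean metric and $W_h\to W_0:=V(p)-E_0$ uniformly on compact sets as $h\to 0$. For any smooth $\tilde v$ solving $(-\Delta+W_0)\tilde v=0$ with $\tilde v(0)=0$, the spherical average $M(r)$ of $\tilde v$ over $\partial B(0,r)$ satisfies the Bessel-type ODE $M''+\frac{n-1}{r}M'-W_0 M=0$ with $M(0)=0$ and smooth at the origin; since the regular solutions of this ODE form a one-dimensional space spanned by a profile non-vanishing at $0$, one concludes $M\equiv 0$, and hence $\int_{B(0,c)}\tilde v\,dy=0$ for every $c$.

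I then argue by contradiction. Suppose the lemma fails for some fixed small $c$, so there exist $h_n\to 0$ and $p_n\in M$ with $u(p_n)=0$ whose rescaled $v_n$ satisfy $|\int_{B(0,c)}v_n|\geqslant(1-1/n)\int_{B(0,c)}|v_n|$. Normalize so that $\|v_n\|_{L^2(B(0,2c))}=1$. Standard interior elliptic regularity applied to the rescaled PDE gives $C^{2,\alpha}$-bounds on $B(0,c)$ that are uniform in $h_n$, since the rescaled coefficients are bounded independently of $h_n$. Passing to subsequences (including one along which $p_n\to p^*$), $v_n\to\tilde v$ in $C^2(B(0,c))$ to an exact solution of $(-\Delta+W_0)\tilde v=0$ with $\tilde v(0)=0$. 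A uniform three-ball (unique continuation) inequality for the rescaled equation forces $\|v_n\|_{L^2(B(0,c))}$ to stay bounded below away from $0$, so $\tilde v\not\equiv 0$ and $\int_{B(0,c)}|\tilde v|>0$. Together with $\int_{B(0,c)}\tilde v=0$ from the spherical-mean identity, this yields $|\int_{B(0,c)}v_n|/\int_{B(0,c)}|v_n|\to 0$, contradicting the hypothesis.

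The main delicacy is the uniform non-degeneracy of $\|v_n\|_{L^2(B(0,c))}$ in the last step; it is crucial that the rescaled equation has $O(1)$ coefficients (rather than $h^{-2}$-scaled ones), so the three-ball/doubling constants at the fixed scale $c$ depend only on the uniform bounds on the coefficients, in sharp contrast with the exponential $e^{-c_0/h}$ loss of Theorem \ref{pr:dp}(ii) at the global scale. The stated corollary then follows from elementary algebra: writing $a=\int u^+$ and $b=\int u^-$, the inequality $|a-b|\leqslant c_0(a+b)$ immediately gives $\min(a,b)\geqslant\frac{1-c_0}{2}(a+b)=\frac{1-c_0}{2}\int|u|$.
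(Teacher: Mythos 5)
There is a genuine gap, and it sits exactly at the step you flag as the ``main delicacy'': the claimed uniform lower bound on $\|v_n\|_{L^2(B(0,c))}$ under the normalization $\|v_n\|_{L^2(B(0,2c))}=1$. A three-ball inequality for the rescaled operator has the form $\|v\|_{L^2(B(0,2c))}\leqslant C\,\|v\|_{L^2(B(0,c))}^{\theta}\,\|v\|_{L^2(B(0,4c))}^{1-\theta}$ with $C,\theta$ depending only on the coefficient bounds; to extract a lower bound on $\|v_n\|_{L^2(B(0,c))}$ from it you need an \emph{upper} bound on $\|v_n\|_{L^2(B(0,4c))}$, i.e.\ a doubling bound at scale $h$ with constant uniform in $h$. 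Such a bound is not a consequence of coefficient bounds alone: doubling constants depend on the solution through its frequency/vanishing order (harmonic polynomials $\Re(x_1+ix_2)^N$ all solve the same equation but double like $2^N$). For the functions at hand the vanishing order at a zero can genuinely be of size $c/h$ (e.g.\ highest-weight spherical harmonics on the round sphere, $h\sim 1/N$, vanishing to order $N$ at the poles), so at scale $ch$ centered at such a zero the doubling constant is of size $e^{c/h}$, and the exponential loss in Theorem \ref{pr:dp}(ii) is not an artifact of working at the global scale. Consequently a violating sequence could perfectly well have $\|v_n\|_{L^2(B(0,c))}\to 0$ with all its mass concentrating near $|y|=2c$; then $\tilde v\equiv 0$ on $B(0,c)$, $\int_{B(0,c)}v_n$ and $\int_{B(0,c)}|v_n|$ both tend to $0$, and no contradiction results. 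This degeneration is not hypothetical: in the spherical-harmonic example the rescaled solution near the pole looks like $(|y|)^{N}\cos(N\phi)$, the limit after normalization is $0$ on every compact subset, and the lemma holds there only because of an angular cancellation that the compactness limit cannot see. So the argument needs a new quantitative ingredient precisely where you invoke ``uniform three-ball/doubling''; the rest of your outline (the spherical-mean ODE for the constant-coefficient limit, the uniform interior $C^{2,\alpha}$ bounds, and the algebra for the corollary) is fine.

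For comparison, the paper avoids compactness altogether and argues quantitatively in the spirit of Colding--Minicozzi: setting $I_v(s)=s^{1-n}\int_{\partial B(p,s)}v$, the hypothesis $u(p)=0$ makes $I_u$ start at $0$, and differentiating $I_u$ and using the equation (the $h^{-2}$ factors are harmless after integrating over $s\leqslant ch$) together with Gronwall gives $\max_{t\leqslant ch}|I_u(t)|\leqslant C\int_0^{ch}I_{|u|}(t)\,dt$; a rescaled interior elliptic estimate $\sup_{B(p,ch/2)}|u|\leqslant Ch^{-n}\int_{B(p,ch)}|u|$ then converts this into $\bigl|\int_{B(p,ch)}u\bigr|\leqslant c_0\int_{B(p,ch)}|u|$ directly, with no lower bounds or doubling required. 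That structure is what lets the paper's proof survive exactly the high-vanishing-order scenario on which the compactness argument breaks down.
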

\begin{proof}
The ideas of our proof is from \cite{C-M}. First, we define for any function $v$,
$$I_v(s)=s^{1-n}\int_{\partial B(p,s)}v.$$
Then
$$I_v'(s)=s^{1-n}\int_{B(p,s)}\Delta v+s^{1-n}\int_{\partial B(p,s)}v(\Delta d+\frac{1-n}{d}).$$
where $d(\cdot)=\dist(p,\cdot)$ denote the distance to the center of the ball. On $M$, when $d$ is small depending on $\sup|K_M|$ and the injective radius of $M$, by Hessian comparison theorem we have
$$|\Delta d+\frac{1-n}{d}|\leqslant k(d)$$
where $k:[0,\infty)\to\mathbb{R}$ is a continuous monotone non-degreasing function such that $k(0)=0$.

Let $l(s)=\max\limits_{x\in B(p,s)}|V(x)-V(p)|$, then $l:[0,\infty)$ is also a continuous monotone non-decreasing function such that $l(0)=0$. Moreover, $l(s)\leqslant(\max\limits_M|\nabla V|)s$.

Since
$$\int_{B(p,s)}u=\int_0^st^{n-1}I_u(t)dt,$$
we have
$$\left|\int_{B(p,s)}u\right|\leqslant\frac{s^n}{n}f(s)$$
where $f(s)=\max_{t\leqslant s}|I_u(t)|$. $f$ is also a continuous monotone non-decreasing function such that $f(0)=0$. Moreover $f$ is Lipschitz. Then
\begin{equation*}
\begin{split}
f'(s)\leqslant|I_u'(s)|\leqslant& h^{-2}s^{1-n}\left|\int_{B(p,s)}(V(x)-E)u\right|+s^{1-n}k(s)\int_{\partial B(p,s)}|u|\\
\leqslant&h^{-2}s^{1-n}|V(p)-E|\left|\int_{B(p,s)}u\right|
+h^{-2}s^{1-n}l(s)\int_{B(p,s)}|u|+k(s)I_{|u|}(s)\\
\leqslant&Ch^{-2}sf(s)+h^{-2}s^{1-n}l(s)\int_0^st^{n-1}I_{|u|}(t)dt
+k(s)I_{|u|}(s)\\
\leqslant&Ch^{-2}sf(s)+h^{-2}l(s)\int_0^sI_{|u|}(t)dt+k(s)I_{|u|}(s).
\end{split}
\end{equation*}
Therefore for $s\leqslant ch$,
$$f(s)\leqslant Ch^{-1}f(s)+Ch^{-1}\int_0^sI_{|u|}(t)dt+k(s)I_{|u|}(s).$$
By Gronwall's inequality, we have
\begin{equation*}
f(s)\leqslant e^{Ch^{-1}s}\int_0^s\left[Ch^{-1}\int_0^tI_{|u|}(t')dt'+k(t)I_{|u|}(t)\right]dt
\leqslant C\int_0^sI_{|u|}(t)dt.
\end{equation*}
Hence,
\begin{equation}
\left|\int_{B(p,ch)}u\right|\leqslant\frac{(ch)^n}{n}f(ch)
\leqslant Ch^n\int_0^{ch}I_{|u|}(t)dt.
\end{equation}
By rescaling $B(p,ch)$ to the ball of unit radius, we get a family of functions $\tilde{u}(x)=u(p+chx)$ on $B(0,1)$ solving a family of uniform elliptic equations $$\sum_{i,j=1}^na_{ij}\partial_i\partial_j\tilde{u}
+\sum_{i=1}^nb_i\partial_i\tilde{u}+c\tilde{u}=0.$$
Here
$$C^{-1}|\xi|^2\leqslant\sum_{i,j=1}^na_{ij}(x)\xi_i\xi_j\leqslant C|\xi|^2, |b_i(x)|\leqslant C, |c(x)|\leqslant C.$$
By standard elliptic estimate, (e.g. \cite{G-T}), we have
\begin{equation*}
\sup_{B(0,\frac{1}{2})}|\tilde{u}|\leqslant C\int_{B(0,1)}|\tilde{u}|.
\end{equation*}
for some $C>0$ uniformly in $\tilde{u}$. Back to $u$, we have
\begin{equation*}
\sup_{B(p,\frac{1}{2}ch)}|u|\leqslant Ch^{-n}\int_{B(p,ch)}|u|.
\end{equation*}
Therefore we have
\begin{equation*}
\begin{split}
\int_0^{ch}I_{|u|}(t)dt=&\int_0^{ch/2}\left(t^{1-n}\int_{\partial B(p,t)}|u|\right)dt+\int_{ch/2}^{ch}\left(t^{1-n}\int_{\partial B(p,t)}|u|\right)dt\\
\leqslant&\;\frac{ch}{2}\left(\sup_{B(p,\frac{1}{2}ch)}|u|\right)
\left(\sup_{t\leqslant ch/2}t^{1-n}\Vol(B(p,t))\right)
+\left(\frac{ch}{2}\right)^{1-n}\int_{ch/2}^{ch}\left(\int_{\partial B(p,t)}|u|\right)dt\\
\leqslant&\;Ch^{1-n}\int_{B(p,ch)}|u|.
\end{split}
\end{equation*}
The last inequality follows from the volume comparison theorem:
$$\sup\limits_{t\leqslant ch/2}t^{1-n}\Vol(B(p,t))$$
is bounded by a constant only depending on the curvature of $M$.
\end{proof}

Now following the idea of \cite{D-F}, we prove the theorem.
\begin{proof}
Assume $U\subset\subset\{V<E\}$ is a coordinate patch, then we can cover $U$ by cubes $Q_\nu$ of side $a_1h$ such that there exists a nodal point $x_\nu$ with $B_\nu=B(x_\nu,ch)\subset\subset\frac{1}{2}Q_\nu$(the cube with the same center and sides of half length of $Q_j$). Then by \cite[Proposition 5.11]{D-F} and the same argument as
\cite[Lemma 7.3,7.4]{D-F},
for at least half of the $Q_\nu$
$$\Av_{B_\nu}u^2\geqslant c\Av_{Q_\nu}u^2.$$
where $\Av_Uf$ denotes the average of $f$ on $U$: $\Av_Uf=|U|^{-1}\int_Uf$.
Now by standard elliptic theory, we have
$$\|u\|_{L^\infty(B_\nu)}\leqslant C(\Av_{Q_\nu}u^2)^{\frac{1}{2}}\leqslant C(\Av_{B_\nu}u^2)^{\frac{1}{2}}$$
Therefore
$$(\Av_{B_\nu}u^2)^{\frac{1}{2}}\leqslant C|B_\nu|^{-1}\int_{B_\nu}|u|$$
Let $B_\nu^+=\{x\in B_\nu:u>0\},B_\nu^-=\{x\in B_\nu,u<0\}$, then
$$\int_{B_\nu^\pm}u\leqslant\|u\|_{B_\mu^{\pm}}|B_\mu^{\pm}|^{\frac{1}{2}}\leqslant
C(\frac{|B_\mu^{\pm}|}{|B_\mu|})^{\frac{1}{2}}(\Av_{B_\nu}u^2)^{\frac{1}{2}}$$
Combining this with the previous lemma, we have
$$\min\{|B_\mu^+|,|B_\mu^-|\}\geqslant c|B_\mu|.$$
The isoperimetric inequality shows that
$$H^{n-1}(B_\mu\cap N)\geqslant c\min\{|B_\mu^+|,|B_\mu^-|\}^{\frac{n-1}{n}}\geqslant ch^{n-1}$$
Since we have at least $ch^{-n}$ such cubes $Q_\nu$, we conclude that
$$H^{n-1}(N\cap U)\geqslant ch^{-1}$$
\end{proof}

\end{document}